\documentclass{article}

\usepackage{authblk}
\usepackage[inline]{enumitem}
\usepackage{color}
\usepackage{graphicx}
\usepackage{tikz}
\usepackage[numbers]{natbib}
\usepackage{amsthm,amsmath,amssymb}
\newtheorem{theorem}{Theorem}
\newtheorem{lem}{Lemma}

\newtheorem{defn}{Definition}
\newtheorem{prop}{Proposition}
\newtheorem{cor}{Corollary}

\setlength{\marginparwidth}{3cm}

\def\eqdef{\stackrel{\rm def}{=}}
\usepackage{mathtools,mathabx}
\usepackage{times}
\usepackage{color}
\usepackage{multicol}
\usepackage{bm}

\usepackage{ebproof}
\usepackage{cancel}

%

%
%

\newcommand{\forrefs}[1]{}

\newcommand{\simrel}{\mathrel E}
\newcommand{\simu}{\rightbarharpoon}
\newcommand{\remc}[1]{\ominus #1}
\newcommand{\CIcon }{c}
\newcommand{\ptype}{\type\infty}
\newcommand{\Sim}[1]{{\rm Sim}({#1})}
\newcommand{\CMod}{\model_\CIcon }
\newcommand{\model}{{\cl M}}
\newcommand{\tnext}{\circ}
\newcommand{\peq}{\preccurlyeq}
\newcommand{\ubox}{\Box}
\newcommand{\diam}{\lozenge}

\newcommand{\itle}{{\sf ITL^e}}
\newcommand{\itlp}{{\sf ITL^p}}
\newcommand{\ignore}[1]{}
\newcommand{\peqT}{\peq_{T}}
\newcommand{\subT}{\subseteq_{T}}
\newcommand{\ST}{\mathrel S_T}

\newcommand{\logbasic}{{\sf ITL}^0_\diam}

\newcommand{\type}[1]{T_{ #1 }}

\newcommand{\landi}{\cl L_\diam}

\newcommand{\unp}[1]{J_{#1}}

\newcommand\cqm[1]{\cl J_{ #1 }}
\newcommand{\irr}[1]{\cl I_{ #1 }}

\def\<{\left (}

\def\>{\right )}
\def\({\left (}
\def\){\right )}

\def\cbra{\left \{}
\def\cket{\right \}}

\def\eqdef{\stackrel{\rm def}{=}}

\def\tnext{\ovoid}

\def\<{\left (}

\def\>{\right )}

\def\cbra{\left \{}
\def\cket{\right \}}
\newcommand{\lang}{\cl L}

\newcommand{\fr}{\bm }
\newcommand{\cl}{\mathcal}
\newcommand{\imp}{\mathop \to}
\newcommand{\seq}{\succcurlyeq}

\newcommand{\Root}[1]{r_{#1}}

\newcommand{\fw}{{{{\fr w}}}}
\newcommand{\fv}{{{{{\fr v}}}}}

\newcommand{\redu}{\mathrel\unlhd}

\def\M{{\cal M}}

\def\iltl{{\sf ITL^e}}



\makeatletter
\newcommand{\manuallabel}[2]{\def\@currentlabel{#2}\label{#1}}
\makeatother

\usepackage{hyperref}

\title{An intuitionistic axiomatization of `eventually'}
\author[1]{Mart\'in Di\'eguez \footnote{\href{mailto:martin.dieguez@enib.fr}{\tt martin.dieguez@enib.fr}}}
\author[2]{David Fern\'andez-Duque \footnote{\href{mailto:david.fernandezduque@ugent.be}{\tt david.fernandezduque@ugent.be}}}
\affil[1]{CERV, ENIB,LAB-STICC. Brest, France}
\affil[2]{Department of Mathematics, Ghent University. Gent, Belgium}

\begin{document}
\maketitle

\begin{abstract}
Boudou and the authors have recently introduced the intuitionistic temporal logic $\itle$ and shown it to be decidable. 
In this article we show that the `henceforth'-free fragment of this logic is complete for the class of {non-deterministic quasimodels} introduced by Fern\'andez-Duque \cite{FernandezITLc}. From this and results of Boudou, Romero and the authors \cite{BoudouLICS}, we conclude that this fragment is also complete for the standard semantics of $\itle$ over the class of {expanding posets.}
\end{abstract}
	

\section{Introduction}

Intuitionistic logic is the basis for constructive reasoning and temporal logics are an important tool for reasoning about dynamic processes.
One would expect that a combination of the two would yield a powerful framework in which to model phenomena involving both computation and time, an idea explored by Davies~\cite{Davies96} and Maier ~\cite{Maier2004Heyting}.
This is not the only potential application of such a logic: in view of the topological interpretation of the intuitionistic implication, one may instead use it to model {\em space} and time \cite{FernandezITLc}.
This makes it important to study these logics, which in particular did not previously enjoy a complete axiomatization in the presence of `infinitary' tenses. Our goal in this paper is to present such an axiomatization for `next' and `eventually'.

\subsection{State-of-the-art}

There are several (poly)modal logics which may be used to model time, and some have already been studied in an intuitionistic setting, e.g.~tense logics by Davoren \cite{Davoren2009} and propositional dynamic logic with iteration by Nishimura \cite{NishimuraConstructivePDL}.
Here we are specifically concerned with intuitionistic analogues of discrete-time linear temporal logic.
Versions of such a logic in finite time have been studied by Kojima and Igarashi \cite{KojimaNext} and Kamide and Wansing \cite{KamideBounded}.
Nevertheless, logics over infinite time have proven to be rather difficult to understand, in no small part due to their similarity to intuitinoistic modal logics such as $\sf IS4$, whose decidablitiy has long remained open \cite{Simpson94}.

In recent times, Balbiani, Boudou and the authors have made some advances in this direction, showing that the intermediate logic of temporal here-and-there is decidable and enjoys a natural axiomatization \cite{BalbianiDieguezJelia} and identifying two conservative temporal extensions of intuitionistic logic, denoted $\itle$ and $\itlp$ (see \S\ref{SecSemantics}). These logics are based on the temporal language with $\tnext$ (`next'), $\diam$ (`eventually') and $\ubox$ (`henceforth'); note that unlike in the classical case, the latter two are not inter-definable \cite{IMLA}.
Both logics are given semantically and interpreted over the class of {\em dynamic posets,} structures of the form $\mathcal F = (W,{\peq},S)$ where $\peq$ is a partial order on $W$ used to interpret implication and $S\colon W\to W$ is used to interpret tenses. If $w\peq v$ implies that $S(w) \peq S(v)$ we say that $\mathcal F$ is an {\em expanding poset;} $\itle$ is then defined to be the set of valid formulas for the class of expanding posets, while $\itlp$ is the logic of {\em peristent posets,} where $\mathcal F$ has the additional {\em backward confluence} condition stating that if $v \seq S(w)$, then there is $u\seq w$ such that $S(u) = v$.

Unlike $\itle$, the logic $\itlp$ satisfies the familiar Fischer Servi axioms \cite{FS84}; nevertheless, $\itle$ has some technical advantages. We have shown that $\itle$ has the small model property while $\itlp$ does not \cite{BoudouCSL}; this implies that $\itle$ is decidable. It is currently unknown if $\itlp$ is even axiomatizable, and in fact its modal cousin ${\sf LTL} \times {\sf S4}$ is not computably enumerable \cite{pml}. On the other hand, while $\itle$ is axiomatizable in principle, the decision procedure we currently know uses model-theoretic techniques and does not suggest a natural axiomatization.

In \cite{BoudouLICS} we laid the groundwork for an axiomatic approach to intuitionistic temporal logics, identifying a family of natural axiom systems that were sound for different classes of structures, including a `minimal' logic ${\sf ITL}^0$ based on a standard axiomatization for $\sf LTL$.
There we consider a wider class of models based on topological semantics and show that ${\sf ITL}^0$ is sound for these semantics, while
\begin{multicols}2
\begin{enumerate}[label=(\alph*)]

\item\label{CDAx} $\ubox(p\vee q) \rightarrow \diam p \vee \ubox q$

\item\label{BIAx} $\ubox (\tnext p \to p) \wedge \ubox(p\vee q)  \rightarrow p \vee \ubox q$

\end{enumerate}
\end{multicols}
\noindent are Kripke-, but not topologically, valid, from which it follows that these principles are not derivable in ${\sf ITL}^0$.

On the other hand, it is also shown in \cite{BoudouLICS} that for $\varphi \in \cl L_\diam$, the following are equivalent:
\begin{enumerate}

\item $\varphi$ is topologically valid,

\item $\varphi$ is valid over the class of expanding posets,

\item $\varphi$ is valid over the class of finite quasimodels.

\end{enumerate}
Quasimodels are discussed in \S\ref{SecNDQ} and are the basis of the completeness for dynamic topological logic presented in \cite{dtlaxiom}, which works for topological, but not Kripke, semantics. This suggests that similar techniques could be employed to give a completeness proof for a natural logic over the $\ubox$-free fragment, but not necessarily over the full temporal language; in fact, we do not currently have a useful notion of quasimodel in the presence of $\ubox$. Moreover, \ref{CDAx} and \ref{BIAx} are not valid in most intuitionistic modal logics, and there is little reason at this point to suspect that no other independent validities are yet to be discovered.
For this reason, in this manuscript we restrict our attention to the $\ubox$-free fragment of the temporal language, which we denote $\cl L_\diam$, and we will work with the logic $\logbasic$, a $\ubox$-free version of ${\sf ITL}^0$.

\subsection{Our main result}

The goal of this article is to prove that $\logbasic$ is complete for the class of non-deterministic quasimodels (Theorem \ref{theocomp}).
The completeness proof follows the general scheme of that for linear temporal logic \cite{temporal}: a set of `local states', which we will call {\em moments,} is defined, where a moment is a representation of a potential point in a model (or, in our case, a quasimodel). To each moment $w$ one then assigns a characteristic formula $\chi(w)$ in such a way that $\chi(w)$ is consistent if and only if $w$ can be included in a model, from which completeness can readily be deduced.

In the $\sf LTL$ setting, a moment is simply a maximal consistent subset of a suitable finite set $\Sigma$ of formulas.
For us a moment is instead a finite labelled tree, and the formula $\chi(w)$ must characterize $w$ up to {\em simulation;} for this reason we will henceforth write $\Sim{w}$ instead of $\chi(w)$.
The required formulas $\Sim{w}$ can readily be constructed in $\landi$ (Proposition \ref{simulability}).

Note that it is {\em failure} of $\Sim{w}$ that characterizes the property of simulating $w$, hence the {\em possible} states will be those moments $w$ such that $\Sim{w}$ is unprovable.
The set of possible moments will form a quasimodel falsifying a given unprovable formula $\varphi$ (Corollary \ref{laststretch}). Thus any unprovable formula is falsifiable, and Theorem \ref{theocomp} follows. We then conclude from Theorem \ref{TheoITLc}, proven in Boudou et al.~\cite{BoudouLICS}, that any unprovable formula is also falsifiable in an expanding poset (Corollary \ref{corcomplete}).

\subsection*{Layout} Section \ref{SecBasic} introduces the syntax and semantics of $\itle$, and Section \ref{SecNDQ} discusses labelled structures, which generalize both models and quasimodels. 
Section \ref{secCanMod} discusses the canonical model, which properly speaking is a deterministic weak quasimodel.
Section \ref{SecSim} reviews simulations and dynamic simulations, including their definability in the intuitionistic language.
Section \ref{seccan} constructs the initial quasimodel and establishes its basic properties, but the fact that it is in fact a quasimodel is proven only in Section \ref{secOmSens} where it is shown that the quasimodel is {\em $\omega$-sensible,} i.e.~it satisfies the required condition to interpret $\diam$. The completeness of $\logbasic$ follows immediately from this fact.

In Appendix \ref{apCanMod} we include the proof that the canonical model is a weak quasimodel, Appendix \ref{secsubchar} gives an explicit construction of simulation formulas and Appendix \ref{secFinFrm} reviews the construction of the initial weak quasimodel from \cite{FernandezITLc}. \forrefs{Finally, Appendix \ref{appReferees} discusses the status of unpublished work for the referees' benefit.}

\section{Syntax and semantics}\label{SecBasic}

Fix a countably infinite set $\mathbb P$ of `propositional variables'. The language $\cl L$ of intuitionistic (linear) temporal logic $\sf ITL$ is given by the grammar
\[ \bot \  | \   p  \ |  \ \varphi\wedge\psi \  |  \ \varphi\vee\psi  \ |  \ \varphi\to\psi  \ |  \ \tnext\varphi \  | \  \diam\varphi \  |  \ \ubox\varphi, \]
where $p\in \mathbb P$. As usual, we use $\neg\varphi$ as a shorthand for $\varphi\to \bot$ and $\varphi \leftrightarrow \psi$ as a shorthand for $(\varphi \to \psi) \wedge (\psi \to \varphi)$. We read $\tnext$ as `next', $\diam$ as `eventually', and $\ubox$ as `henceforth'.
Given any formula $\varphi$,
we denote the set of subformulas of $\varphi$ by ${\mathrm{sub}}(\varphi)$.
We will work mainly in the language $\cl L_{\diam}$, defined as the sublanguage of $\cl L$ without the modality $\ubox$, although the full language will be discussed occasionally.

\subsection{Semantics}\label{SecSemantics}

Formulas of $\lang$ are interpreted over expanding posets. An {\em expanding poset} is a tuple $\mathcal D = (|\cl D|,{\peq}_\cl D,S_\cl D)$,
where 
$|\cl D|$ is a non-empty set of moments,
$\peq_\cl D$ is a partial order over $|\cl D|$,
and
$S_\cl D$ is a function from $|\cl D|$ to $|\cl D|$ satisfying the {\em forward confluence} condition that for all $ w, v \in |\cl D|, $ if $ w \peq_\cl D v $ then $ S_\cl D(w) \peq S_\cl D(v).$
We will omit the subindices in $\peq_\cl D$, $S_\cl D$ when $\cl D$ is clear from context and write $v\prec w$ if $v\peq w$ and $v\not = w$.
An {\em intuitionistic dynamic model,} or simply {\em model,} is a tuple $\model = (|\model|,\peq_\model,S_\model,V_\model)$
consisting of an expanding poset equipped
with a valuation function $V$ from $|\cl M|$ to sets of propositional variables that is {$\peq$-monotone,} in the sense that
for all $ w, v \in W, $ if $ w \peq   v $ then $ V  (w) \subseteq V  (v).$ In the standard way, we define $S^0(w) = w$ and,
for all $k > 0$, $S^k(w) = S\left(S^{k-1}(w)\right)$.
 Then we define the satisfaction relation $\models$ inductively by:

\noindent\begin{enumerate}
	\item $\model, w \models p $  if $p \in V(w) $;
	\item $\model, w \not\models \bot$;
	\item $\model, w \models \varphi\wedge \psi$  if $  \model, w \models \varphi $ and $\model, w \models \psi$;
	\item $\model, w \models \varphi \vee \psi $  if  $ \model, w \models \varphi $ or $\model, w \models \psi$;
	\item $\model, w \models \tnext \varphi $   if $ \model, S(w) \models \varphi $;
		\item  $\model, w \models \varphi \imp \psi $ if $\forall v \seq w $, if $\model, v \models \varphi$ then $\model, v \models \psi $;
	\item $\model, w \models \diam \varphi $  if there exists $ k $ such that $ \model, S^k(w) \models \varphi$;
 \item $\model, w \models \ubox \varphi $  if for all $ k $, $ \model, S^k(w) \models \varphi$.
\end{enumerate}
As usual, a formula $\varphi$ is {\em valid over a class of models $\Omega$} if, for every world $w$ of every model $\model\in \Omega$, $\model,w\models\varphi$.
The set of valid formulas over an arbitrary expanding poset will be called $\iltl$, or {\em expanding intuitionistic temporal logic;} the terminology was coined in \cite{BoudouCSL} and is a reference to the closely-related {\em expanding products} of modal logics \cite{pml}. The main result of \cite{BoudouCSL} is the following.

\begin{theorem}\label{ThoDecid}
$\iltl$ is decidable.
\end{theorem}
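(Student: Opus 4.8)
The plan is to prove a \emph{finite model property} for $\iltl$ together with a computable bound on the size of a falsifying structure, and then to observe that decidability follows by exhaustive search. The only real obstacle is that an expanding poset may be infinite --- and worse, the naive filtration of a model need not be a model, since collapsing worlds can destroy the runs $w, S(w), S^2(w),\dots$ that witness a formula $\diam\psi$ (or that are needed to refute a formula $\ubox\psi$). To get around this I would pass through an intermediate, purely combinatorial semantics of \emph{quasimodels} and carry out the filtration there.

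First I would define a quasimodel over a formula $\varphi$ as a finitely-branching labelled structure $(W,{\peq},R,\ell)$ in which $\ell$ assigns to each $w$ a ``type'', i.e.\ a subset of ${\mathrm{sub}}(\varphi)$ that is locally coherent with the propositional connectives and with the fixpoint laws $\diam\psi\leftrightarrow\psi\vee\tnext\diam\psi$ and $\ubox\psi\leftrightarrow\psi\wedge\tnext\ubox\psi$; $R$ is a serial successor relation satisfying the relational analogues of forward and backward confluence with respect to $\peq$; and the types are $\peq$-monotone. On top of this one imposes $\omega$-\emph{sensibility}: whenever $\diam\psi\in\ell(w)$ some $R$-path from $w$ reaches a type containing $\psi$, whenever $\ubox\psi\in\ell(w)$ every $R$-path from $w$ stays inside types containing $\psi$, and the dual conditions when these formulas are absent. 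Every model induces a quasimodel by sending each world to its set of true subformulas (with $R$ the graph of $S$), so the easy half of the correspondence --- satisfiability over expanding posets implies satisfiability over $\omega$-sensible quasimodels --- is immediate.

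Next I would prove the substantive ``unwinding'' half: from any $\omega$-sensible quasimodel one can build a genuine expanding poset realizing the same types. Its worlds are \emph{runs} $\rho\colon\mathbb N\to W$ that follow $R$ and schedule all the pending $\diam$-obligations along them (round-robin scheduling together with $\omega$-sensibility guarantees that such a run through any given world exists); the order is induced pointwise from $\peq$, and $S$ is the shift, which is then automatically a forward-confluent total function, while the forward- and backward-confluence of $R$ are exactly what make the order on runs rich enough for the implication case. A Truth Lemma --- proved by induction on subformulas, with the $\diam$ and $\ubox$ cases using $\omega$-sensibility and the scheduling --- then shows that each run $\rho$ forces precisely the formulas in $\ell(\rho(0))$. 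Hence satisfiability over expanding posets coincides with satisfiability over $\omega$-sensible quasimodels.

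It then remains to bound the size. I would filter an arbitrary $\omega$-sensible quasimodel by identifying worlds carrying the same type, augmented with a bounded amount of bookkeeping --- for instance, for each pending $\diam$-obligation a ``deadline'' counter --- chosen precisely so that the quotient is still $\omega$-sensible. Arranging that the finitely many $\diam$-witnesses survive the collapse, and that no spurious $\ubox$-violation is created, is the delicate point, and where I expect the main difficulty to lie. The number of types is at most $2^{|{\mathrm{sub}}(\varphi)|}$ and the bookkeeping contributes at most a further exponential, so there are only finitely many --- and effectively enumerable --- candidate quasimodels up to the bound. Since it is decidable whether a given finite quasimodel is $\omega$-sensible and whether it refutes $\varphi$ at some world, one can decide validity of $\varphi$ by testing all quasimodels up to this size; therefore $\iltl$ is decidable.
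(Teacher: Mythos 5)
This theorem is not proved in the present paper at all: it is imported from Boudou et al.~\cite{BoudouCSL}, where decidability is obtained by a direct, purely model-theoretic small-model argument for expanding posets over the \emph{full} language. Your proposal instead routes everything through quasimodels, and this is precisely the route the paper explicitly flags as unavailable once $\ubox$ is in the language: ``we do not currently have a useful notion of quasimodel in the presence of $\ubox$.'' The quasimodel correspondence the paper does use (Theorem~\ref{TheoITLc}) is stated only for $\varphi\in\cl L_\diam$, and the obstruction sits exactly in your ``substantive unwinding half.'' When you turn a quasimodel into an expanding poset whose worlds are runs ordered pointwise, the implication clause forces you, given a run $\rho$ and a defect $\varphi\to\psi$, to produce a run $\rho'$ lying pointwise above $\rho$ that realizes \emph{its own} pending obligations. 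For $\diam$-obligations this can be arranged (that is the content of the $\cl L_\diam$ theory), but a negative $\ubox\psi$-obligation on $\rho'$ requires $\rho'$ itself to reach a point falsifying $\psi$ while never dropping below $\rho$, and $\rho$ was built with no knowledge of $\rho'$'s needs. This is not a technicality: the formulas $\ubox(p\vee q)\to\diam p\vee\ubox q$ and $\ubox(\tnext p\to p)\wedge\ubox(p\vee q)\to p\vee\ubox q$ are valid on expanding posets but fail in the (topological) structures that the natural quasimodel-unwinding produces, so any quasimodel notion for which your ``easy half'' holds will refute Kripke-valid formulas, and your two halves cannot both go through as stated.

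Two further points. First, you impose ``backward confluence'' on $R$; that is the defining condition of \emph{persistent} posets, i.e.\ of $\itlp$, not of $\itle$. If you require it, the translation from models to quasimodels (your ``easy half'') already fails, since expanding posets are only forward-confluent; and for $\itlp$ even axiomatizability, let alone decidability by these means, is open. Second, you yourself identify the filtration/bookkeeping step as ``the delicate point'' and do not carry it out; since the entire decidability claim rests on that bound, the argument as written establishes neither the finite model property nor an effective bound. To repair the proof you would have to work directly with expanding posets, as in \cite{BoudouCSL}, rather than with $\ubox$-sensitive quasimodels.
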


Nevertheless, Theorem \ref{ThoDecid} is proved using purely model-theoretic techniques that do not suggest an axiomatization in an obvious way.
In \cite{BoudouLICS} we introduced the axiomatic system ${\sf ITL}^0$, inspired by standard axiomatizations for $\sf LTL$. As we will see, adapting this system to $\cl L_\diam$ yields a sound and complete deductive calculus for the class of expanding posets.

\subsection{The axiomatization}

Our axiomatization obtained from propositional intuitionistic logic \cite{MintsInt} by adding standard axioms and inference rules of $\sf LTL$ \cite{temporal}, although modified to use $\diam$ instead of $\ubox$.
To be precise, the logic $\logbasic$ is the least set of $\cl L_\diam$-formulas closed under the following axiom schemes and rules:
\medskip

\begin{multicols}2
\begin{enumerate}[label=(A\arabic*)]
\item\label{ax01Taut} All intuitionistic tautologies
\item\label{ax02Bot} $\neg \tnext \bot$
\item\label{ax03NexWedge} $ \tnext \varphi \wedge\tnext \psi  \rightarrow \tnext \left( \varphi \wedge \psi \right)$
\item\label{ax04NexVee} $ \tnext \left( \varphi \vee \psi \right) \rightarrow  \tnext \varphi \vee\tnext \psi $
\item\label{ax05KNext} $\tnext\left( \varphi \rightarrow \psi \right) \rightarrow \left(\tnext\varphi \rightarrow \tnext\psi\right)$
\item\label{ax10DiamFix} $\varphi \vee \tnext \diam \varphi \to \diam \varphi$
\end{enumerate}
\end{multicols}

\begin{multicols}2
\begin{enumerate}[label=({R\arabic*})]

\item\label{ax13MP} $\displaystyle\frac {\varphi \ \ \ \varphi\to \psi }\psi$
\item\label{ax14NecCirc} $\displaystyle\frac \varphi {\tnext\varphi}$
\item\label{ax11:dist} $\displaystyle\frac{ \varphi \rightarrow  \psi } { \diam \varphi \rightarrow \diam \psi }$
\item\label{ax12:ind:2} $\displaystyle\frac{ \tnext \varphi \to \varphi} { \diam \varphi \rightarrow \varphi } $

\end{enumerate}
\end{multicols}

The axioms \ref{ax02Bot}-\ref{ax05KNext} are standard for a functional modality.  Axiom \ref{ax10DiamFix} is the dual of $\ubox \varphi \rightarrow \varphi \wedge \tnext \ubox \varphi$.
The rule \ref{ax11:dist} replaces the dual K-axiom $\ubox(\varphi \to \psi)\to(\diam \varphi \to \diam \psi)$, while \ref{ax12:ind:2} is dual to the induction rule $ \frac{ \varphi \to \tnext  \varphi} { \varphi \rightarrow \ubox \varphi } $.
As we show next, we can also derive the converses of some of these axioms. Below, for a set of formulas $\Gamma$ we define $\tnext \Gamma = \{\tnext\varphi : \varphi \in \Gamma\}$, and empty conjunctions and disjunctions are defined by $\bigwedge\varnothing =\top$ and $\bigvee \varnothing = \bot$.

\begin{lem}\label{lemmReverse}
Let $\varphi \in \landi$ and $\Gamma\subseteq \landi$ be finite. Then, the following are derivable in $\logbasic$:
\begin{enumerate}

\item $\tnext \bigwedge \Gamma \leftrightarrow \bigwedge \tnext \Gamma$

\item $\tnext \bigvee \Gamma \leftrightarrow \bigvee \tnext \Gamma$

\item\label{itReverseDiam} $\diam \varphi \to \varphi \vee \tnext \diam \varphi$.
\end{enumerate}

\end{lem}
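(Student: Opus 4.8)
The plan is to derive each of the three items directly in $\logbasic$: items~1 and~2 by induction on $|\Gamma|$ using the functional-modality axioms, and item~3 via the induction rule \ref{ax12:ind:2}.

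For item~1, the left-to-right implication is obtained pointwise: for each $\varphi\in\Gamma$ the formula $\bigwedge\Gamma\to\varphi$ is an intuitionistic tautology \ref{ax01Taut}, so \ref{ax14NecCirc} followed by \ref{ax05KNext} yields $\tnext\bigwedge\Gamma\to\tnext\varphi$, and intuitionistic reasoning then gives $\tnext\bigwedge\Gamma\to\bigwedge\tnext\Gamma$. The right-to-left implication is proved by induction on $|\Gamma|$: the base case $\Gamma=\varnothing$ reduces to $\top\to\tnext\top$, which holds because $\top$ is derivable and hence so is $\tnext\top$ by \ref{ax14NecCirc}; for the inductive step, writing $\Gamma=\Gamma'\cup\{\varphi\}$, one combines the induction hypothesis with axiom \ref{ax03NexWedge} in the form $\tnext\bigwedge\Gamma'\wedge\tnext\varphi\to\tnext(\bigwedge\Gamma'\wedge\varphi)$ and the tautology $\bigwedge\Gamma\leftrightarrow\bigwedge\Gamma'\wedge\varphi$, the latter pushed through $\tnext$ using \ref{ax14NecCirc} and \ref{ax05KNext}. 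Item~2 is symmetric: left-to-right is a finite iteration of \ref{ax04NexVee}; right-to-left uses that each $\varphi\to\bigvee\Gamma$ is a tautology, so $\tnext\varphi\to\tnext\bigvee\Gamma$ by \ref{ax14NecCirc} and \ref{ax05KNext}, with the base case $\Gamma=\varnothing$ reducing to $\tnext\bot\leftrightarrow\bot$, which holds by \ref{ax02Bot} and \emph{ex falso}.

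For item~3, set $\psi\eqdef\varphi\vee\tnext\diam\varphi$; the point is to verify the premise $\tnext\psi\to\psi$ of rule \ref{ax12:ind:2}. By \ref{ax04NexVee}, $\tnext\psi\to\tnext\varphi\vee\tnext\tnext\diam\varphi$. From the instances $\varphi\to\diam\varphi$ and $\tnext\diam\varphi\to\diam\varphi$ of \ref{ax10DiamFix}, applying \ref{ax14NecCirc} and \ref{ax05KNext} gives $\tnext\varphi\to\tnext\diam\varphi$ and $\tnext\tnext\diam\varphi\to\tnext\diam\varphi$, hence $\tnext\psi\to\tnext\diam\varphi$; since $\tnext\diam\varphi$ is a disjunct of $\psi$, we get $\tnext\psi\to\psi$. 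Rule \ref{ax12:ind:2} then yields $\diam\psi\to\psi$. Finally, $\varphi\to\psi$ is a tautology, so rule \ref{ax11:dist} gives $\diam\varphi\to\diam\psi$, and chaining with $\diam\psi\to\psi$ gives $\diam\varphi\to\varphi\vee\tnext\diam\varphi$. (Together with \ref{ax10DiamFix} this establishes the fixpoint equivalence $\diam\varphi\leftrightarrow\varphi\vee\tnext\diam\varphi$, which is the form in which item~3 will be used later.)

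Items~1 and~2 are routine bookkeeping with the functional-modality axioms; the only subtlety is handling the empty conjunction and disjunction correctly. The main obstacle — modest as it is — lies in item~3, namely recognizing that $\psi=\varphi\vee\tnext\diam\varphi$ is the right formula to feed into the induction rule \ref{ax12:ind:2}; once that choice is made, every subsequent step is forced.
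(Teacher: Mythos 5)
Your proof is correct and follows essentially the same route as the paper's: items 1 and 2 via \ref{ax03NexWedge}/\ref{ax04NexVee} in one direction and \ref{ax14NecCirc} with \ref{ax05KNext} in the other (with \ref{ax02Bot} for the empty case), and item 3 by feeding $\varphi\vee\tnext\diam\varphi$ into the induction rule \ref{ax12:ind:2} after deriving $\tnext(\varphi\vee\tnext\diam\varphi)\to\varphi\vee\tnext\diam\varphi$ from \ref{ax10DiamFix}, \ref{ax04NexVee} and \ref{ax11:dist}. The only difference is that you spell out the inductive bookkeeping the paper leaves to the reader.
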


\proof
For the first two claims, one direction is obtained from repeated use of axioms \ref{ax03NexWedge} or \ref{ax04NexVee} and the other is proven using \ref{ax14NecCirc} and \ref{ax05KNext}; note that the second claim requires \ref{ax02Bot} to treat the case when $\Gamma = \varnothing$. Details are left to the reader.

For the third claim, reasoning within $\logbasic$, note that $\varphi \to \diam \varphi$ holds by \ref{ax10DiamFix} and propositional reasoning, hence $\tnext\varphi \to \tnext \diam \varphi$ by \ref{ax14NecCirc}, \ref{ax05KNext} and \ref{ax13MP}.
By similar reasoning, $\tnext \tnext \diam \varphi \to \tnext \diam \varphi$ holds, hence so does $\tnext \varphi \vee \tnext \tnext \diam \varphi \to \tnext \diam \varphi$. Using \ref{ax04NexVee} and some propositional reasoning we obtain $\tnext(\varphi \vee \tnext \diam \varphi) \to \varphi \vee \tnext \diam \varphi$.
But then, by \ref{ax12:ind:2}, $\diam(\varphi \vee \tnext \diam \varphi) \to \varphi \vee \tnext \diam \varphi$; since $\diam\varphi \to \diam (\varphi \vee \tnext \diam \varphi)$ can be proven using \ref{ax11:dist}, we obtain $\diam\varphi \to \varphi \vee \tnext \diam \varphi$, as needed.
\endproof

For purposes of this discussion, a {\em logic} may be any set $\Lambda \subseteq \cl L$, and we may write $\Lambda \vdash \varphi$ instead of $\varphi \in \Lambda$. Then, $\Lambda$ is {\em sound} for a class of structures $\Omega$ if, whenever $\Lambda \vdash \varphi$, it follows that $\Omega \models \varphi$.
The following is essentially proven in \cite{BoudouLICS}:
   
\begin{theorem}\label{ThmSoundZero}
  $\logbasic$ is sound for the class of expanding posets.
  \end{theorem}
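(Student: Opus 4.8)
The plan is to establish soundness in the standard way: show that each axiom scheme is valid over every expanding poset, and that each inference rule preserves validity. Since $\logbasic$ is defined as the least set of $\cl L_\diam$-formulas closed under these schemes and rules, a routine induction on the length of a derivation then yields that every theorem is valid.

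First I would dispatch the propositional axioms \ref{ax01Taut}: validity of all intuitionistic tautologies over the underlying poset $(|\cl M|,\peq)$ is exactly soundness of intuitionistic propositional logic for Kripke semantics on posets, which is classical. Next, the `next'-axioms \ref{ax02Bot}--\ref{ax05KNext} are verified pointwise by unfolding the clause $\model,w\models\tnext\varphi$ iff $\model,S(w)\models\varphi$: since $S$ is a total function, $\tnext$ behaves as a normal functional modality, so $\neg\tnext\bot$, the conjunction and disjunction exchange laws, and the $K$-axiom all hold at every world $w$ by direct reference to the single successor $S(w)$. For \ref{ax10DiamFix}, $\varphi\vee\tnext\diam\varphi\to\diam\varphi$, one observes at a world $w$ that if $\model,w\models\varphi$ then $k=0$ witnesses $\diam\varphi$, and if $\model,w\models\tnext\diam\varphi$ then $\model,S(w)\models\diam\varphi$, so some $S^j(S(w))=S^{j+1}(w)$ satisfies $\varphi$; one must also check persistence, i.e.\ that this disjunction is evaluated correctly at all $\peq$-successors, but this is immediate since the argument is uniform in $w$.

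For the rules: \ref{ax13MP} preserves validity trivially. \ref{ax14NecCirc} (necessitation for $\tnext$) preserves validity because if $\varphi$ holds at every world of every model, then in particular it holds at $S(w)$ for every $w$, so $\tnext\varphi$ holds everywhere. Rule \ref{ax11:dist} is handled by noting that if $\varphi\to\psi$ is valid, then at any $w$ with $\model,w\models\diam\varphi$ we get some $S^k(w)\models\varphi$; since validity of $\varphi\to\psi$ gives $\model,S^k(w)\models\psi$, we conclude $\model,w\models\diam\psi$ (and again the argument is uniform over $\peq$-successors, so $\diam\varphi\to\diam\psi$ is valid). The one rule deserving genuine care is the induction rule \ref{ax12:ind:2}: assuming $\tnext\varphi\to\varphi$ is valid, we must show $\diam\varphi\to\varphi$ is valid. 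Fix $w$ with $\model,w\models\diam\varphi$, so $\model,S^k(w)\models\varphi$ for some $k$; a finite downward induction on $k$ using the validity of $\tnext\varphi\to\varphi$ at each $S^j(w)$ (note $\tnext\varphi\to\varphi$ being valid means it holds at $S^j(w)$, hence $\model,S^{j+1}(w)\models\varphi$ implies $\model,S^j(w)\models\varphi$) gives $\model,w\models\varphi$; persistence to $\peq$-successors is again automatic from uniformity.

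The main obstacle — modest as it is — is bookkeeping the intuitionistic persistence clause for the $\diam$-axioms and rules: one must confirm that the truth set of each relevant formula is $\peq$-upward closed, which follows from the standard monotonicity lemma ($\model,w\models\chi$ and $w\peq v$ imply $\model,v\models\chi$) proven by induction on $\chi$, itself relying on forward confluence of $S$ to handle $\tnext$ and $\diam$. Since the paper attributes this theorem to \cite{BoudouLICS} (``essentially proven''), in the write-up I would simply cite that the monotonicity lemma and the validity verifications for ${\sf ITL}^0$ restrict to the $\ubox$-free fragment $\logbasic$ verbatim, and remark that dropping $\ubox$ only removes obligations, never adds them.
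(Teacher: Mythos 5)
Your proposal is correct and follows essentially the same route as the paper, which simply delegates the standard axiom-by-axiom and rule-by-rule soundness verification to \cite{BoudouLICS} and notes that the restriction to the $\ubox$-free fragment is innocuous. One small point of precision: the validity of the K-axiom \ref{ax05KNext} is not purely a matter of $S$ being a total function, since the antecedent $\tnext\varphi$ must be checked at every $u\seq v$ and one needs forward confluence to obtain $S(u)\peq S(v)$ before applying $\varphi\to\psi$ at $S(v)$ --- but this is exactly the ingredient you already invoke for the monotonicity lemma, so nothing essential is missing.
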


Note however that a few of the axioms and rules have been modified to fall within $\cl L_\diam$, but these modifications are innocuous and their correctness may be readily checked by the reader.
We remark that, in contrast to Lemma \ref{lemmReverse}, $(\tnext p\to \tnext q)\to \tnext(p\to q)$ is not valid \cite{IMLA}, hence by Theorem \ref{ThmSoundZero}, it is not derivable.

\section{Labelled structures}\label{SecNDQ}

The central ingredient of our completeness proof is given by non-deterministic {\em quasimodels,} introduced by Fern\'andez-Duque in the context of dynamic topological logic \cite{FernandezNonDeterministic} and later adapted to intuitionistic temporal logic \cite{FernandezITLc}.

\subsection{Two-sided types}

Quasimodels are structures whose worlds are labelled by types, as defined below.
More specifically, following \cite{BoudouLICS}, our quasimodels will be based on two-sided types. 

\begin{defn}\label{def:type}
Let $\Sigma \subseteq \cl L_\diam$ be closed under subformulas and $\Phi^-,\Phi ^ + \subseteq \Sigma$. We say that the pair $\Phi = (\Phi ^- ; \Phi ^+) $ is a {\em two-sided $\Sigma$-type} if:
  \begin{enumerate}[label=(\alph*)]

  \item $\Phi^- \cap \Phi ^ +  = \varnothing$,
        \label{cond:type:intersection}
        
        \item $\Phi^- \cup \Phi ^ +  = \Sigma$,
         \label{cond:type:union}

  \item $\bot\not\in \Phi ^ +$,
        \label{cond:type:bot}

  \item if $\varphi \wedge \psi \in \Sigma$, then $\varphi\wedge\psi\in \Phi ^ +$ if and only if $\varphi,\psi\in \Phi^+$,
        \label{cond:type:posconj}


  \item if $\varphi \vee \psi \in \Sigma$, then $\varphi\vee\psi\in \Phi ^ +$ if and only if $\varphi \in \Phi^+$ or $\psi\in \Phi^+$,
        \label{cond:type:posdisj}


  \item if $\varphi\to\psi\in \Phi^+$, then either $\varphi \in \Phi^-$ or $\psi \in\Phi^+$, and
        \label{cond:type:implication}

\item\label{cond:type:diam} if $\diam \varphi \in \Phi^-$ then $\varphi \in \Phi^-$.

  \end{enumerate}
The set of two-sided $\Sigma$-types will be denoted $\type{\Sigma}$.
  
  We will write $\Phi \peqT \Psi$ if $\Phi^+ \subseteq \Psi^+$ (or, equivalently, if $\Psi^- \subseteq \Phi^-$). If $\Sigma \subseteq \Delta$ are both closed under subformulas, $\Phi \in \type \Sigma$ and $\Psi \in \type \Delta$, we will write $\Phi \subT \Psi $ if $\Phi^- \subseteq \Psi^-$ and $\Phi^+ \subseteq \Psi^+$.
\end{defn}

Often (but not always) we will want $\Sigma$ to be finite, in which case given $\Delta\subseteq \landi$ we write $\Sigma \Subset\Delta$ if $\Sigma$ is finite and closed under subformulas.
It is not hard to check that $\peqT$ is a partial order on $\type\Sigma$.
   Whenever $\Xi$ is an expression denoting a two-sided type, we write $\Xi^-$ and $\Xi^+$ to denote its components. Elements of $\type{\cl L_\diam}$ are {\em full types.}
Note that Fern\'andez-Duque \cite{FernandezITLc} uses one-sided types, but it is readily checked that a one-sided $\Sigma$-type $\Phi$ as defined there can be regarded as a two-sided type $\Psi$ by setting $\Psi^+=\Phi$ and $\Psi^- = \Sigma \setminus \Phi$.
Henceforth we will refer to two-sided types simply as {\em types.}

\subsection{Quasimodels}

Next we will define quasimodels; these are similar to models, except that valuations are replaced with a labelling function $\ell$. We first define the more basic notion of {\em $\Sigma$-labelled frame.}

\begin{defn}\label{frame}
Let $\Sigma \subseteq \cl L_\diam$ be closed under subformulas.  A {\em $\Sigma$-labelled frame} is a triple $\cl F= ( |\cl F|,{\peq}_\cl F,\ell_\cl F )$,
  where $\peq_\cl F$ is a partial order on $|\cl F|$
  and $\ell_\cl F\colon |\cl F| \to \type{\Sigma}$ is such that 
  \begin{enumerate}[label=(\alph*)]
    \item \label{cond:frame:monotony} whenever $w \peq_\cl F v$ it follows that $\ell_\cl F(w) \peqT \ell_\cl F(v)$, and
    \item \label{cond:frame:imp} whenever $\varphi\to\psi \in \ell_\cl F^-(w)$, there is $v \peq_\cl F w$ such that $\varphi\in \ell_\cl F^+(v)$ and $\psi \in \ell_\cl F^-(v)$.
  \end{enumerate}
 We say that $\cl F$ {\em falsifies} $\varphi \in \cl L_\diam$ if $\varphi \in \ell^-(w)$ for some $w \in W$.
\end{defn}

As before, we may omit the subindexes in $\peq_\cl F$, $S_\cl F$ and $\ell_\cl F$ when $\cl F$ is clear from context.
Labelled frames model only the intuitionistic aspect of the logic.
For the temporal dimension, let us define a new relation over types.

\begin{defn}\label{compatible}
Let $\Sigma \subseteq \cl L_\diam$ be closed under subformulas.
  We define a relation $\ST \subseteq \type{\Sigma} \times \type{\Sigma}$ by $\Phi \ST \Psi$ iff for all $\varphi \in \cl L$:
  \begin{enumerate}[label=(\alph*)]
  \item\label{ItCompOne} if $\tnext\varphi\in \Phi^+$ then $ \varphi\in \Psi^+$,
  \item\label{ItCompTwo} if $\tnext\varphi\in \Phi^-$ then $ \varphi\in \Psi^-$,
  \item\label{ItCompThree} if $\diam\varphi\in \Phi^+$ and $\varphi\in\Phi^-$ then $\diam\varphi\in \Psi^+$, and
  \item\label{ItCompFour} if $\diam\varphi\in \Phi^-$, then $\diam \varphi \in \Psi^-$.
  \end{enumerate}
\end{defn}

Quasimodels are then defined as labelled frames with a suitable binary relation.

\begin{defn} \label{def:quasimodel}
Given $\Sigma \subseteq \cl L_\diam$ closed under subformulas, a \emph{$\Sigma$-quasimodel} is a tuple $\cl Q = (|\cl Q|, \mathord{\peq}_\cl Q, S_\cl Q, \ell_\cl Q)$
  where $(|\cl Q|, \mathord{\peq}_\cl Q, \ell_\cl Q)$ is a labelled frame
  and $S_\cl Q$ is a binary relation over $|\cl Q|$ that is
  \begin{enumerate}
  
    \item\label{itSerial} {\em serial:} for all $w \in |\cl Q|$ there is $v \in |\cl Q| $ such that $w \mathrel S_\cl Q v$;
    \item {\em forward-confluent:} if $w \peq_\cl Q w'$ and $w \mathrel S_\cl Q v$, there is $v'$ such that $v\peq_\cl Q v'$ and $w'\mathrel S _\cl Q v'$;
    \item {\em sensible:} if $w \mathrel S_\cl Q v$ then $\ell _\cl Q (w) \ST \ell _\cl Q (v)$, and
    \item\label{itOmega} {\em $\omega$-sensible:} whenever $\diam\varphi\in \ell_\cl Q ^+(w)$, there are $n\geq 0$ and $v$ such that $w \mathrel S _\cl Q ^n v$ and $\varphi\in \ell _\cl Q^+(v)$.

  \end{enumerate}
A forward-confluent, sensible $\Sigma$-labelled frame is a {\em weak $\Sigma$-quasimodel,} and if $S_\cl Q$ is a function we say that $\cl Q$ is {\em deterministic.}
\end{defn}

\begin{figure}[h!]

\begin{center}

\begin{tikzpicture}[scale=.6]

\draw[thick] (0,0) circle (.35);

\draw (0,0) node {$w$};

\draw[very thick,->] (.5,0) -- (2.5,0);

\draw (1.5,.5) node {$S$};

\draw[thick] (3,0) circle (.35);

\draw (3,.02) node {$v$};

\draw[thick] (0,3) circle (.35);

\draw (.04,3-.03) node {$w'$};

\draw[very thick,->,dashed] (.5, 3) -- (2.5, 3);

\draw (1.5,3.5) node {$S$};

\draw[very thick,<-] (0, 2.5) -- (0, .5);

\draw (-.5,1.5) node {{\large$\rotatebox[origin=c]{90}{$\peq$}$}};

\draw[thick,dashed] (3, 3) circle (.35);

\draw (3+.04, 3-.03) node {$v'$};

\draw[very thick,<-,dashed] (3, 2.5) -- (3, .5);

\draw (3.5, 1.5) node {{\large$\rotatebox[origin=c]{90}{$\peq$}$}};

\end{tikzpicture}

\end{center}

\caption{If $S$ is forward-confluent, then the above diagram can always be completed.}
\end{figure}
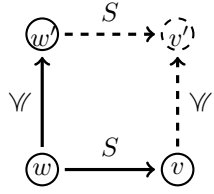

We may write {\em quasimodel} instead of $\Sigma$-quasimodel when $\Sigma$ is clear from context, and {\em full quasimodel} instead of {\em $\cl L_\diam$-quasimodel.} Similar conventions apply to labelled structures, weak quasimodels, etc.

\begin{defn}
	
Let  $\cl Q $ be a weak quasimodel and let $U$ be such that $U \subseteq |{\cl Q}|$. The restriction of ${\cl Q}$ with respect to $U$ is defined to be the structure
\[{\cl Q} \upharpoonright U = (|{\cl Q} \upharpoonright U|, \mathord{\peq}_{{\cl Q}\upharpoonright U}, S_{{\cl Q}\upharpoonright U}, \ell_{{\cl Q}\upharpoonright U}),\]
where:

\begin{multicols}2
\begin{enumerate}
\item $|{\cl Q} \upharpoonright U| = U$;
\item $\mathord{\peq}_{{\cl Q}\upharpoonright U}  = \mathord{\peq}_{\cl Q}\cap \left(U \times U\right)$;
\item $S_{{\cl Q}\upharpoonright U}  = S_{\cl Q} \cap \left(U \times U\right)$;
\item $\ell_{{\cl Q}\upharpoonright U} =  \ell_{\cl Q} \cap (U \times \type \Sigma) $.
\end{enumerate}
\end{multicols}
\end{defn}

\begin{lem}\label{LemmIsQuasi}
If $\cl Q$ is a weak quasimodel, $U \subseteq |\cl Q|$ is upward closed and $S_{\cl Q} \upharpoonright U$ is serial and $\omega$-sensible, then $\cl Q \upharpoonright U$ is a quasimodel.
\end{lem}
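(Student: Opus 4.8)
The plan is to verify directly that $\cl Q \upharpoonright U$ satisfies every clause in the definition of a quasimodel (Definition \ref{def:quasimodel}), using the fact that $\cl Q$ is already a weak quasimodel and that $U$ is upward closed. Write $\cl R = \cl Q \upharpoonright U$ for brevity. The properties to check split into two groups: the ``labelled frame'' conditions (monotonicity of $\ell$ and the implication-witness condition from Definition \ref{frame}), and the four conditions on the temporal relation $S$ from Definition \ref{def:quasimodel} (serial, forward-confluent, sensible, $\omega$-sensible). Seriality and $\omega$-sensibility are given to us by hypothesis, so the real work is in the remaining four conditions.

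First I would dispatch the cheap inheritances. Monotonicity of $\ell_\cl R$, sensibility of $S_\cl R$, and condition \ref{cond:frame:imp} of Definition \ref{frame} restricted to \emph{pairs of points already in $U$} are all immediate from the corresponding facts in $\cl Q$, since $\peq_\cl R$, $S_\cl R$ and $\ell_\cl R$ are just the restrictions of $\peq_\cl Q$, $S_\cl Q$ and $\ell_\cl Q$. The point where upward-closedness of $U$ is essential is the implication-witness condition: if $\varphi \to \psi \in \ell_\cl R^-(w)$ with $w \in U$, then $\cl Q$ provides some $v \peq_\cl Q w$ with $\varphi \in \ell_\cl Q^+(v)$, $\psi \in \ell_\cl Q^-(v)$ --- but a priori $v$ need not lie in $U$. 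Here I would instead appeal to the labelled frame condition in the form that actually holds inside a weak quasimodel: in fact condition \ref{cond:frame:imp} for $\cl Q$ combined with the monotonicity of $\ell$ shows that the witness can be taken to be $w$ itself is false in general, so the correct move is to observe that $U$ being upward closed means $v \peq_\cl Q w$ need not help --- rather, I should check whether the definition is stated with $v \seq w$ or $v \peq w$. Reading Definition \ref{frame}\ref{cond:frame:imp}, the witness $v$ satisfies $v \peq_\cl F w$, i.e. $v$ is \emph{below} $w$; since $U$ is upward closed this does not immediately place $v \in U$. The resolution: a weak quasimodel's labelled frame has the stronger property that $\varphi\to\psi\in\ell^-(w)$ already forces $\varphi\in\ell^+(w)$ or, failing that, the witness is found among predecessors --- so one must recheck the intended reading. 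Assuming (as the lemma's phrasing and its use downstream suggest) that the witness is sought \emph{upward}, i.e.\ among $v \seq w$, then upward-closedness of $U$ gives $v \in U$ and we are done; this is the one place where the hypothesis on $U$ is used for the frame part.

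For forward-confluence of $S_\cl R$: suppose $w \peq_\cl R w'$ and $w \mathrel S_\cl R v$, so $w,w',v \in U$ and the same relations hold in $\cl Q$. Forward-confluence of $\cl Q$ yields $v'$ with $v \peq_\cl Q v'$ and $w' \mathrel S_\cl Q v'$; since $v \in U$ and $U$ is upward closed, $v' \in U$, hence $v \peq_\cl R v'$ and $w' \mathrel S_\cl R v'$, as required. This is the cleanest use of upward-closedness. Seriality of $S_\cl R$ and $\omega$-sensibility of $S_\cl R$ are hypotheses of the lemma (the hypothesis ``$S_\cl Q \upharpoonright U$ is serial and $\omega$-sensible'' is exactly what is needed), so nothing remains to prove there beyond noting that $S_{\cl R} = S_\cl Q \cap (U\times U) = S_\cl Q \upharpoonright U$.

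The main obstacle I anticipate is precisely the implication-witness condition \ref{cond:frame:imp}: one must be careful about the direction in which the witness is sought and confirm that upward-closedness of $U$ (rather than downward-closedness) is the right hypothesis to transfer it. If the definition genuinely seeks a witness below $w$, then the lemma would require $U$ downward closed for that clause, which would contradict the statement; so the correct reading must be that the witness lies at or above $w$ in the $\peq$ order, and I would make that explicit in the write-up. Every other clause is a routine ``restrict and observe the new point stays in $U$'' argument, with forward-confluence being the only other clause that genuinely invokes upward-closedness.
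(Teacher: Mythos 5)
Your proof is correct and follows essentially the same route as the paper's: inherit the routine conditions from $\cl Q$, use upward-closedness of $U$ to keep both the implication-witness and the forward-confluence witness inside $U$, and take seriality and $\omega$-sensibility directly from the hypothesis. Your reading of condition \ref{cond:frame:imp} is the right one --- the witness is sought at or above $w$ (the ``$v \peq_\cl F w$'' in Definition \ref{frame} is a slip for $v \seq_\cl F w$, as the semantics of $\to$ and the paper's own proof of this lemma confirm), so upward-closedness does exactly the job you describe.
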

\proof We must show that ${\cl Q}$ satisfies all properties of Definition~\ref{def:quasimodel}. First we check that
\[(U , \mathord{\peq}_{{\cl Q}\upharpoonright U}, \ell_{{\cl Q}\upharpoonright U})\]
is a labelled frame.
The relation $\mathord{\peq}_{{\cl Q}\upharpoonright U}$ is a partial order, since restrictions of partial orders are partial orders.
Similarly, if $x \peq_{\cl Q\upharpoonright U} y$ it follows that
$x \peq_{{\cl Q}} y$, so that from the definition of
$\ell_{{\cl Q}\upharpoonright U}$ it is easy to deduce that
$\ell_{{\cl Q}\upharpoonright U}(x) \peqT \ell_{{\cl Q}\upharpoonright U}(y)$.

To check that condition~\ref{cond:frame:imp} holds, let us take $x \in U$ and a formula $\varphi \to \psi \in \ell_{{\cl Q}\upharpoonright U}^-(x)$. By definition, $\varphi \to \psi \in \ell_{{\cl Q}}^-(x)$ so there exists $y \in |{\cl Q}|$ such that $x \peq_{\cl Q} y$, $\varphi \in \ell_{\cl Q}^+(y)$ and $\psi \in \ell_{\cl Q}^-(y)$. Note that, since $U$ is upward closed then $y\in U$ and, by definition, $x  \peq_{{\cl Q}\upharpoonright U} y$, $\varphi \in \ell_{{\cl Q}\upharpoonright U}^+(y)$ and $\psi \in \ell_{{\cl Q}\upharpoonright U}^-(y)$, as needed.
\medskip

Now we check that the relation $S_{\cl Q\upharpoonright U}$ satisfies \eqref{itSerial}-\eqref{itOmega}.
Note that $S_{\cl Q\upharpoonright U}$ is serial and $\omega$-sensible by assumption and it is clearly sensible as $S_\cl Q$ was already sensible, so it remains to see that $S_{{\cl Q}\upharpoonright U}$ is forward-confluent. Take $x,y, z \in U$ such that $ x  \peq_{{\cl Q}\upharpoonright U} y$ and $x \mathrel S_{{\cl Q}\upharpoonright U} z$. By definition $x  \peq _{\cl Q} y$ and $x \mathrel S_{\cl Q} y$. Since $S_{\cl Q}$ is confluent, there exists $t \in |{\cl Q}|$ such that $ z  \peq _{\cl Q} t$ and $y \mathrel S_{\cl Q} t$. Since $U$ is upward closed $t \in U$ and, by definition, $y \mathrel S_{{\cl Q}\upharpoonright U} t$ and $z \mathord{\peq}_{{\cl Q}\upharpoonright U} t$.
\endproof

The following result of \cite{BoudouLICS} will be crucial for our completeness proof.

\begin{theorem}\label{TheoITLc}
A formula $\varphi\in \cl L_\diam$ is falsifiable over the class of expanding posets if and only if it is falsifiable over the class of finite, ${\rm sub}(\varphi)$-quasimodels.
\end{theorem}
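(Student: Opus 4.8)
The statement asserts a bidirectional transfer between expanding posets and finite $\mathrm{sub}(\varphi)$-quasimodels. Since the result is attributed to \cite{BoudouLICS}, I would structure the plan as a recap of the two directions, each of which has a standard flavor for this genre of completeness-via-quasimodels arguments.

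For the easy direction — falsifiability over a finite quasimodel implies falsifiability over an expanding poset — the plan is to unravel the quasimodel into a genuine model. Given a finite $\Sigma$-quasimodel $\cl Q$ falsifying $\varphi$ at some world $w_0$ (i.e. $\varphi \in \ell^-_{\cl Q}(w_0)$), I would build an expanding poset whose underlying set consists of pairs $(w, \sigma)$ where $w \in |\cl Q|$ and $\sigma$ is an infinite $S_{\cl Q}$-path starting at $w$; the order is inherited from $\peq_{\cl Q}$ (lifted coordinatewise along matching paths via forward-confluence), and $S$ shifts the path by one step. One sets $(w,\sigma) \in V(p)$ iff $p \in \ell^+_{\cl Q}(w)$. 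Forward confluence of $S_{\cl Q}$ gives forward confluence of the resulting $S$, and $\peq$-monotonicity of $V$ follows from condition \ref{cond:frame:monotony}. The key lemma is a \emph{truth lemma}: for every $\psi \in \Sigma$ and every $(w,\sigma)$, one has $\model,(w,\sigma)\models\psi$ iff $\psi \in \ell^+(w)$, and $\model,(w,\sigma)\not\models\psi$ iff $\psi\in\ell^-(w)$. This is proven by induction on $\psi$; the $\tnext$ and $\diam$ cases use sensibility and, crucially, $\omega$-sensibility of $S_{\cl Q}$ (for the left-to-right direction of $\diam$ one needs an actual path realizing the eventuality, which is why working with explicit paths is convenient), while the implication case uses condition \ref{cond:frame:imp} of the labelled frame together with $\peq$-saturation. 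Applying the truth lemma at $(w_0,\sigma)$ for any path $\sigma$ shows $\model,(w_0,\sigma)\not\models\varphi$.

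For the harder direction — falsifiability over an expanding poset implies falsifiability over a \emph{finite} $\mathrm{sub}(\varphi)$-quasimodel — the plan is a filtration argument. Starting from a model $\model$ and a world $w_0$ with $\model,w_0\not\models\varphi$, set $\Sigma = \mathrm{sub}(\varphi)$ and map each world $w$ to its $\Sigma$-type $t_\Sigma(w) = (\{\psi\in\Sigma : \model,w\not\models\psi\}\,;\,\{\psi\in\Sigma:\model,w\models\psi\})$; one checks this is a genuine two-sided $\Sigma$-type in the sense of Definition \ref{def:type}. The filtrated structure has $|\cl Q|$ a suitable finite set of worlds (or of types together with enough witnesses), $\peq$ and $S$ induced, and $\ell = t_\Sigma$. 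Seriality and forward confluence transfer directly from $\model$; sensibility is immediate from the definition of $\ST$. The delicate point is \emph{$\omega$-sensibility}: when $\diam\psi$ is true at $w$, the witnessing $S^k(w)$ may lie outside the chosen finite set, so one must be careful to include, for each world and each satisfied $\diam$-formula, a finite initial segment of a witnessing path and then "loop back" appropriately — this is the classical obstacle in proving the small/finite model property for logics with an "eventually" operator, and is exactly where the real work lies. In this paper's setting it is handled by the machinery of \cite{BoudouCSL,BoudouLICS}; I would cite that the finite quasimodel can be extracted by taking the quotient under type-equivalence and then performing a bounded unfolding of eventualities, yielding a finite $\Sigma$-quasimodel that still falsifies $\varphi$ because $\varphi\in\ell^-(w_0)$ by construction.

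The main obstacle, then, is the $\omega$-sensibility condition in the hard direction: one must simultaneously keep the quasimodel finite and ensure every positive $\diam$ claim is discharged by an actual finite path in the structure, which forces a careful interleaving of filtration with a path-surgery ("looping") construction. Everything else — the two-sided type bookkeeping, transfer of seriality, forward confluence, and sensibility, and the truth lemma in the easy direction — is routine induction on formula structure.
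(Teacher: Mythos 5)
The paper does not actually prove this theorem: it is imported wholesale from Boudou et al.~\cite{BoudouLICS} (``The following result of \cite{BoudouLICS} will be crucial for our completeness proof''), so there is no in-paper argument to compare yours against, and your decision to defer the substantive work --- in particular the finite-model extraction with its eventuality-looping --- to that reference matches exactly what the authors do. Your reconstruction of the two directions also has the right overall shape: unravelling a finite quasimodel into a poset of paths in one direction, and filtration through two-sided $\mathrm{sub}(\varphi)$-types in the other.

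There is, however, one genuine soft spot in your sketch of the ``easy'' direction. The truth lemma as you state it --- for \emph{every} pair $(w,\sigma)$ with $\sigma$ an arbitrary infinite $S_{\cl Q}$-path from $w$ --- fails in the $\diam$ case. If $\diam\psi\in\ell^+(w)$, $\omega$-sensibility only guarantees that \emph{some} finite $S_{\cl Q}$-path from $w$ reaches a world with $\psi$ in its positive label; the particular path $\sigma$ you have fixed may avoid all such worlds forever (for instance by looping through a world where $\diam\psi\in\ell^+$ but $\psi\in\ell^-$, which the sensibility clauses permit indefinitely), in which case $\diam\psi$ is false at $(w,\sigma)$ in the unravelled model even though it sits in $\ell^+(w)$. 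The standard repair is to restrict the carrier to \emph{realizing} (fair) paths, along which every positively labelled eventuality is eventually discharged, and then to re-verify that such paths exist through every world (using seriality together with $\omega$-sensibility) and that they are closed under the liftings you need for forward confluence and for the implication clause of the truth lemma. That bookkeeping is where the real content of this direction lives; the remainder of your plan is routine and consistent with how \cite{BoudouLICS} and the earlier dynamic topological logic literature proceed.
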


As usual, if $\varphi$ is not derivable, we wish to produce an expanding poset where $\varphi$ is falsified, but in view of Theorem \ref{TheoITLc}, it suffices to falsify $\varphi$ on a quasimodel. This is convenient, as quasimodels are much easier to construct than models.

\section{The canonical model}\label{secCanMod}

The standard canonical model for $\logbasic$ it is only a full, weak, deterministic quasimodel rather than a proper model.
Nevertheless, it will be a useful ingredient in our completeness proof.
Since we are working over an intuitionistic logic, the role of maximal consistent sets will be played by prime types, which we define below; recall that {\em full types} are elements of $\type{\cl L_\diam}$. 

\begin{defn}\label{def:prime}
Given two sets of formulas $\Gamma$ and $\Delta$, we say that $\Delta$ is a consequence of $\Gamma$ (denoted by $\Gamma \vdash \Delta$) if there exist finite $\Gamma'\subseteq \Gamma$ and $\Delta' \subseteq \Delta$ such that $\logbasic \vdash \bigwedge \Gamma' \to \bigvee \Delta'$.

We say that a pair of sets $\Phi =(\Phi^-,\Phi^+)$ is {\em full} if $\Phi^-\cup \Phi^+ = \cl L_\diam$, and {\em consistent} if $\Phi^+ \not\vdash \Phi^-$. A full, consistent type is a {\em prime type.} The set of prime types will be denoted $\ptype$.
\end{defn}

Note that we are using the standard interpretation of $\Gamma \vdash \Delta$ in Gentzen-style calculi.
When working within a turnstyle, we will follow the usual proof-theoretic conventions of writing $\Gamma,\Delta$ instead of $\Gamma \cup \Delta$ and $\varphi$ instead of $\{\varphi\}$.
Observe that there is no clash in terminology regarding the use of the word {\em type:}

\begin{lem}\label{lemmPrimeIsType}
If $\Phi$ is a prime type then $\Phi$ is an $\cl L_\diam$-type.
\end{lem}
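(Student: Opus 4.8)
The plan is to verify the seven conditions \ref{cond:type:intersection}--\ref{cond:type:diam} of Definition~\ref{def:type} with $\Sigma = \cl L_\diam$, using that $\Phi$ is full and consistent. First I would observe that fullness gives condition \ref{cond:type:union} immediately, and that consistency ($\Phi^+\not\vdash\Phi^-$) gives \ref{cond:type:intersection}: if some $\varphi\in\Phi^-\cap\Phi^+$, then taking $\Gamma' = \Delta' = \{\varphi\}$ witnesses $\Phi^+\vdash\Phi^-$ via the intuitionistic tautology $\varphi\to\varphi$, a contradiction. Condition \ref{cond:type:bot} is the special case $\varphi = \bot$: if $\bot\in\Phi^+$ then $\Phi^+\vdash\bot$ and hence $\Phi^+\vdash\Phi^-$ vacuously (or, more carefully, $\Phi^-$ is non-empty since it contains $\bot$ by \ref{cond:type:intersection}-fullness, so $\Phi^+\vdash\Phi^-$), contradicting consistency.

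The heart of the argument is the closure conditions \ref{cond:type:posconj}, \ref{cond:type:posdisj}, \ref{cond:type:implication}, \ref{cond:type:diam}, each of which I would reduce to a small derivability fact in $\logbasic$ combined with consistency. For \ref{cond:type:posconj}: if $\varphi\wedge\psi\in\Phi^+$ but, say, $\varphi\in\Phi^-$, then $\varphi\wedge\psi\vdash\varphi$ witnesses $\Phi^+\vdash\Phi^-$; conversely if $\varphi,\psi\in\Phi^+$ but $\varphi\wedge\psi\in\Phi^-$, then $\varphi\wedge\psi$ follows from $\{\varphi,\psi\}$, again contradicting consistency. Here I use fullness to know that whichever of $\varphi\wedge\psi,\varphi,\psi$ is not in $\Phi^+$ must lie in $\Phi^-$. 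Condition \ref{cond:type:posdisj} is dual: from $\varphi\wedge\psi$-style reasoning using $\varphi\vdash\varphi\vee\psi$ and $\psi\vdash\varphi\vee\psi$ in one direction, and using the intuitionistic tautology that $\varphi\vee\psi$ together with $\neg\varphi,\neg\psi$ is inconsistent — more precisely, if $\varphi\vee\psi\in\Phi^+$ and $\varphi,\psi\in\Phi^-$ then $\Phi^+\ni\varphi\vee\psi$ and $\Delta' = \{\varphi,\psi\}\subseteq\Phi^-$ give $\logbasic\vdash(\varphi\vee\psi)\to(\varphi\vee\psi)$, so $\Phi^+\vdash\Phi^-$. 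For \ref{cond:type:implication}: if $\varphi\to\psi\in\Phi^+$ but $\varphi\notin\Phi^-$ (so $\varphi\in\Phi^+$ by fullness) and $\psi\notin\Phi^+$ (so $\psi\in\Phi^-$), then $\{\varphi\to\psi,\varphi\}\vdash\psi$ by modus ponens, contradicting consistency. For \ref{cond:type:diam}: if $\diam\varphi\in\Phi^-$ but $\varphi\notin\Phi^-$, then $\varphi\in\Phi^+$, and since $\logbasic\vdash\varphi\to\diam\varphi$ (by \ref{ax10DiamFix} and propositional reasoning, as already used in the proof of Lemma~\ref{lemmReverse}), taking $\Gamma' = \{\varphi\}\subseteq\Phi^+$ and $\Delta' = \{\diam\varphi\}\subseteq\Phi^-$ gives $\Phi^+\vdash\Phi^-$, a contradiction.

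The only step requiring any care beyond bookkeeping is being precise about the Gentzen-style consequence relation $\vdash$ on pairs of sets, in particular handling empty $\Delta'$ (where $\bigvee\varnothing = \bot$) and making sure each inconsistency I exhibit genuinely fits the shape $\logbasic\vdash\bigwedge\Gamma'\to\bigvee\Delta'$ with $\Gamma'\subseteq\Phi^+$ finite and $\Delta'\subseteq\Phi^-$ finite. I expect this to be the main (minor) obstacle: it is entirely routine, but one must be consistent about where fullness is invoked to place the complementary formula into $\Phi^-$. Accordingly I would present the proof compactly, perhaps remarking that each clause follows from consistency together with the indicated intuitionistic tautology or $\logbasic$-derivable formula, and spell out one or two representative cases (conjunction and $\diam$) in full.
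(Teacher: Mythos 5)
Your proof is correct and follows essentially the same route as the paper: condition \ref{cond:type:union} from fullness, conditions \ref{cond:type:intersection} and \ref{cond:type:bot} from consistency, and each closure condition by exhibiting the relevant intuitionistic tautology (or, for \ref{cond:type:diam}, the $\logbasic$-derivable $\varphi\to\diam\varphi$ from \ref{ax10DiamFix}) as a witness to $\Phi^+\vdash\Phi^-$, invoking fullness to place the complementary formula on the appropriate side. The only wobble is the parenthetical alternative for \ref{cond:type:bot} (if $\bot\in\Phi^+$ then $\bot\notin\Phi^-$ by disjointness, so that remark is off), but your primary argument via $\Delta'=\varnothing$ and $\bigvee\varnothing=\bot$ is sound.
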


\begin{proof}
Let $\Phi$ be a prime type; we must check that $\Phi$ satisfies all conditions of Definition \ref{def:type}.
Condition \ref{cond:type:union} holds by assumption, and conditions \ref{cond:type:intersection} and \ref{cond:type:bot} follow from the consistency of $\Phi$.

The proofs of the other conditions are all similar to each other. For example, for \ref{cond:type:implication}, suppose that $\varphi \to \psi \in \Phi^+$ and $\varphi \not \in \Phi^-$. Since $\Phi$ is full, it follows that $\varphi \in \Phi^+$. But $\big (\varphi \wedge (\varphi \to \psi)\big) \to \psi$ is an intuitionistic tautology, so using the fact that $\Phi$ is consistent we see that $\psi \not \in \Psi^-$, which once again using condition \ref{cond:type:union} gives us $\psi \in \Phi^+$.
For condition \ref{cond:type:diam} we use \ref{ax10DiamFix}: if $\diam \varphi \in \Phi^-$ and $\varphi \in \Phi^+$ we would have that $\Phi$ is inconsistent, hence $\varphi \in \Phi^-$. The rest of the conditions are left to the reader.
\end{proof}

As with maximal consistent sets, prime types satisfy a Lindenbaum property.

\begin{lem}[Lindenbaum Lemma]\label{LemmLind}
	Let $\Gamma$ and $\Delta$ be sets of formulas. If $\Gamma \not\vdash \Delta$ then there exists a prime type $\Phi$ such that $\Gamma \subseteq \Phi^+$ and $\Delta \subseteq \Phi^-$.
\end{lem}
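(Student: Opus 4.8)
The plan is to adapt the classical Lindenbaum construction to the two-sided, Gentzen-style setting, building $\Phi = (\Phi^-,\Phi^+)$ by a step-by-step extension of $(\Delta,\Gamma)$ that preserves the underivability $\Phi^+ \not\vdash \Phi^-$ at every stage. First I would fix an enumeration $\varphi_0,\varphi_1,\dots$ of all of $\cl L_\diam$ (this is a countable language since $\mathbb P$ is countable). Starting from $\Gamma_0 = \Gamma$ and $\Delta_0 = \Delta$, at stage $n+1$ I look at $\varphi_n$: if $\Gamma_n \cup \{\varphi_n\} \not\vdash \Delta_n$, I put $\varphi_n$ into the positive side, i.e.\ $\Gamma_{n+1} = \Gamma_n \cup \{\varphi_n\}$, $\Delta_{n+1} = \Delta_n$; otherwise I put it into the negative side, $\Gamma_{n+1} = \Gamma_n$, $\Delta_{n+1} = \Delta_n \cup \{\varphi_n\}$. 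Finally set $\Phi^+ = \bigcup_n \Gamma_n$ and $\Phi^- = \bigcup_n \Delta_n$.

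The key invariant to maintain is $\Gamma_n \not\vdash \Delta_n$ for all $n$, which holds at $n=0$ by hypothesis. In the first case of the step this is immediate. In the second case I need: if $\Gamma_n \cup \{\varphi_n\} \vdash \Delta_n$, then $\Gamma_n \not\vdash \Delta_n \cup \{\varphi_n\}$. This is the heart of the argument and is where I expect the only real work: it is a cut-style manipulation at the level of the derivability relation $\vdash$ of Definition~\ref{def:prime}. Concretely, suppose for contradiction that both $\Gamma_n,\varphi_n \vdash \Delta_n$ and $\Gamma_n \vdash \Delta_n,\varphi_n$ held; passing to the finite witnesses and using intuitionistic propositional reasoning (axiom~\ref{ax01Taut} together with \ref{ax13MP}), from $\logbasic \vdash \bigwedge\Gamma' \wedge \varphi_n \to \bigvee\Delta'$ and $\logbasic \vdash \bigwedge\Gamma'' \to \bigvee\Delta'' \vee \varphi_n$ one derives $\logbasic \vdash \bigwedge(\Gamma'\cup\Gamma'') \to \bigvee(\Delta'\cup\Delta'')$ by a purely propositional case split on $\varphi_n$, contradicting $\Gamma_n \not\vdash \Delta_n$. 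So at least one of the two options keeps the invariant, and the construction (which always takes the first option when it is available) is well-defined with the invariant preserved.

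It then remains to check that $\Phi = (\Phi^-,\Phi^+)$ is a prime type and that $\Gamma \subseteq \Phi^+$, $\Delta \subseteq \Phi^-$. The inclusions are immediate from $\Gamma_0 = \Gamma \subseteq \Phi^+$ and $\Delta_0 = \Delta \subseteq \Phi^-$. Fullness, $\Phi^- \cup \Phi^+ = \cl L_\diam$, holds because every $\varphi_n$ is placed on one side at stage $n+1$. For consistency, $\Phi^+ \not\vdash \Phi^-$: since $\vdash$ only involves finite subsets, any derivation $\bigwedge\Gamma' \to \bigvee\Delta'$ with $\Gamma' \subseteq \Phi^+$, $\Delta' \subseteq \Phi^-$ finite already has $\Gamma' \subseteq \Gamma_n$ and $\Delta' \subseteq \Delta_n$ for some sufficiently large $n$ (take the max of the indices involved), contradicting $\Gamma_n \not\vdash \Delta_n$; here I also need that the two sides are disjoint, which again follows from the invariant since $\varphi \in \Phi^+ \cap \Phi^-$ would give the one-line derivation $\logbasic \vdash \varphi \to \varphi$. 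By Lemma~\ref{lemmPrimeIsType} a prime type is automatically an $\cl L_\diam$-type, so nothing further need be verified. The main obstacle, as noted, is the cut/case-split lemma for $\vdash$ in the second step; everything else is bookkeeping about finiteness of derivations.
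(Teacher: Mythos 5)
Your proposal is correct and follows essentially the same route as the paper's own (sketched) proof: enumerate the formulas, place each on one side while preserving the underivability invariant via an intuitionistically admissible cut/case-split, and take unions. The only difference is that you spell out the details the paper leaves to the reader, including the explicit propositional form of the cut step.
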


\proof
The proof is standard, but we provide a sketch.
Let $\varphi \in \cl L_\diam$. Note that either $\Gamma ,\varphi  \not\vdash \Delta $ or $\Gamma  \not \vdash \Delta,\varphi$, for otherwise by a cut rule (which is intuitionistically admissible) we would have $\Gamma \vdash \Delta$. Thus we can add $\varphi$ to $\Gamma \cup \Delta$, and by repeating this process for each element of $\cl L_\diam$ (or using Zorn's lemma) we can find suitable $\Phi$.
\endproof

Given a set $A$, let $\mathbb I_A$ denote the identity function on $A$. Then, the canonical model $\CMod$ is then defined as the labelled structure
\[\CMod = (|\CMod|,{\peq_\CIcon },S_\CIcon ,\ell_\CIcon ) \eqdef  (\type{\cl L_\diam},{\peq_T},S_T,{\mathbb I}_{\ptype})\upharpoonright \ptype;\]
in other words, $\CMod$ is the set of prime types with the usual ordering and successor relations. Note that $\ell_{\CIcon}$ is just the identity (i.e., $\ell_\CIcon (\Phi) = \Phi$).
We will usually omit writing $\ell_\CIcon $, as it has no effect on its argument.

\begin{prop}\label{prop:CisW}
The canonical model is a deterministic weak quasimodel.
\end{prop}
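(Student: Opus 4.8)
The plan is to verify, one by one, the defining properties of a deterministic weak quasimodel from Definition \ref{def:quasimodel}: that $(\type{\cl L_\diam}, {\peq_T}, {\mathbb I}_\ptype) \upharpoonright \ptype$ is a labelled frame, that $S_T \upharpoonright \ptype$ is a (total) function on $\ptype$, and that this function is forward-confluent and sensible. The $\omega$-sensibility condition is, of course, \emph{not} required for a weak quasimodel, which is the whole point of this intermediate object. Throughout, we lean on Lemma \ref{lemmPrimeIsType} (so prime types really are types, giving the algebraic closure conditions for free) and on the Lindenbaum Lemma \ref{LemmLind} (the standard tool for producing prime types witnessing consistency facts).

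First I would check the labelled-frame conditions. Monotonicity of $\ell_\CIcon$ along $\peq_T$ is immediate since $\ell_\CIcon$ is the identity and $\peq_T$ is by definition $\subseteq$ on the positive parts. For condition \ref{cond:frame:imp}: if $\varphi \to \psi \in \Phi^-$ for a prime type $\Phi$, then $\Phi^+ \not\vdash \varphi \to \psi$; I claim $\Phi^+ \cup \{\varphi\} \not\vdash \psi$, for otherwise by the deduction theorem (intuitionistically valid for $\to$) we would get $\Phi^+ \vdash \varphi \to \psi$, a contradiction. By the Lindenbaum Lemma applied to $(\Phi^+ \cup \{\varphi\}) \not\vdash (\{\psi\} \cup \Phi^-)$ — here one must also check $\Phi^+, \varphi \not\vdash \Phi^-$, which holds since even $\Phi^+ \not\vdash \Phi^-$ could only fail if adding $\varphi$ helped, but then $\Phi^+ \vdash \varphi \to \bigvee \Phi^-$... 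I would spell this out carefully — we obtain a prime type $\Psi \seq \Phi$ with $\varphi \in \Psi^+$ and $\psi \in \Psi^-$, which is exactly what is needed. (One subtlety: Lindenbaum gives $\Phi \peq_T \Psi$ from $\Phi^+ \subseteq \Psi^+$, and upward closure of $\ptype$ in itself is trivial.)

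Next, the temporal relation. To see $S_T$ restricted to $\ptype$ is serial \emph{and} functional — hence a function — take a prime type $\Phi$ and consider $\Gamma = \{\varphi : \tnext\varphi \in \Phi^+\} \cup \{\diam\varphi : \diam\varphi \in \Phi^+, \varphi \in \Phi^-\}$ and $\Delta = \{\varphi : \tnext\varphi \in \Phi^-\} \cup \{\diam\varphi : \diam\varphi \in \Phi^-\}$, matching the four clauses of Definition \ref{compatible}. The key lemma is $\Gamma \not\vdash \Delta$; this is where the $\tnext$-axioms \ref{ax02Bot}--\ref{ax05KNext} and the rule \ref{ax14NecCirc} do their work, together with \ref{ax10DiamFix} and Lemma \ref{lemmReverse}\ref{itReverseDiam} for the $\diam$-clauses, reducing a hypothetical derivation $\bigwedge\Gamma' \to \bigvee\Delta'$ to a $\tnext$-prefixed consequence that would contradict $\Phi^+ \not\vdash \Phi^-$. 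Lindenbaum then yields a prime $\Psi \ST \Phi$; for \emph{functionality} one observes that $\Gamma \cup \Delta$ already covers all formulas $\varphi$ with $\tnext\varphi \in \cl L_\diam$ and all $\diam\varphi$, and for a full type these, via the type closure conditions and \ref{ax10DiamFix}, pin down $\Psi$ uniquely; so $S_T$ is genuinely a partial function on full types, total and closed on $\ptype$.

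Finally, forward-confluence: given prime $\Phi \peq_T \Phi'$ and $\Psi$ with $\Phi \ST \Psi$, I need $\Psi'$ prime with $\Psi \peq_T \Psi'$ and $\Phi' \ST \Psi'$; since $S_T$ is functional, $\Psi' = S_T(\Phi')$ is forced, so the content is showing $\Psi^+ \subseteq \Psi'^+$, which follows from $\Phi^+ \subseteq \Phi'^+$ by chasing the defining clauses of $\ST$ (each clause's hypothesis about $\Phi$ transfers to $\Phi'$ monotonically — for clause \ref{ItCompThree} one uses that $\diam\varphi \in \Phi^+ \subseteq \Phi'^+$ and $\varphi \in \Phi^-$, and $\varphi \in \Phi'^-$ need \emph{not} hold, but then $\varphi \in \Phi'^+$ forces $\diam\varphi \in \Psi'^+$ anyway via \ref{ax10DiamFix}, so a small case split finishes it). Sensibility is immediate from the definition of $S_T$. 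I expect the main obstacle to be the proof that $\Gamma \not\vdash \Delta$ in the seriality step — carefully justifying that a derivation of $\bigwedge\Gamma' \to \bigvee\Delta'$ can be pushed under $\tnext$ using exactly the available axioms, especially handling the $\diam$-conjuncts of $\Gamma$ and the edge case $\Gamma' = \varnothing$ via \ref{ax02Bot} — together with the analogous confluence edge case for clause \ref{ItCompThree}. Since this is the canonical-model construction, I would relegate the full bookkeeping to Appendix \ref{apCanMod} as the excerpt's layout indicates.
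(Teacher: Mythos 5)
Your proposal is correct and follows essentially the same route as the paper: the labelled-frame condition \ref{cond:frame:imp} via the deduction theorem and the Lindenbaum Lemma, the successor of a prime type $\Phi$ pinned down by fullness (the paper writes it explicitly as $(\{\varphi:\tnext\varphi\in\Phi^-\};\{\varphi:\tnext\varphi\in\Phi^+\})$ and checks primality directly, which is what your Lindenbaum application to the $\ST$-constraints collapses to), sensibility from Lemma \ref{lemmReverse}.\ref{itReverseDiam} and \ref{ax10DiamFix}, and forward confluence read off from functionality. The paper likewise relegates the two substantive verifications to Appendix \ref{apCanMod}, exactly as you propose.
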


\proof
In view of Definition \ref{def:quasimodel}, we need
\begin{enumerate*}
\item $(|\CMod|,{\peq}_\CIcon ,\ell_\CIcon )$ to be a labelled frame,
\item $S_\CIcon $ to be a sensible forward-confluent function, and
\item $\ell_\CIcon $ to have $\type{\cl L_\diam}$ as its codomain.
\end{enumerate*}
The first item is Lemma \ref{lemm:normality} in the Appendix. That $S_\CIcon $ is a forward-confluent function is Lemma \ref{lemm:rcnext:prop}, and it is sensible since $\Phi \mathrel S_\CIcon  \Psi$ precisely when $\Phi \ST \Psi$. Finally, if $\Phi \in |\CMod|$ then $\ell_\CIcon (\Phi) = \Phi$, which is an element of $\type{\cl L_\diam}$ by Lemma \ref{lemmPrimeIsType}.
\endproof

\section{Simulations}\label{SecSim}

Simulations are relations between worlds in labelled spaces, and give rise to the appropriate notion of `substructure' for modal and intuitionistic logics.
We have used them to prove that a topological intuitionistic temporal logic has the finite quasimodel property \cite{FernandezITLc}, and they will also be useful for our completeness proof.
Below, recall that $\Phi \subseteq \Psi$ means that $\Phi ^ - \subseteq \Phi^-$  and $\Phi ^ + \subseteq \Phi^+$.

\begin{defn}
Let $\Sigma\subseteq \Delta \subseteq \landi$ be closed under subformulas, $\cl X$ be a $\Sigma$-labelled frame and $\cl Y$ be $\Delta$-labelled. A forward-confluent relation
${\simrel} \subseteq |{\cl X}|\times |{\cl Y}|$
is a {\em simulation} if, whenever $x\simrel y$, $\ell_{\cl X} (x) \subT  \ell_{\cl Y}(y).$
If there exists a simulation $\simrel$ such that $x \simrel y$, we write $(\cl X , x)\simu (\cl Y, y)$.

The relation $\simrel$ is a {\em dynamic simulation} between $\cl X$ and $\cl Y$ if ${
\mathrel S_\cl Y\simrel} \subseteq { \simrel \mathrel S_\cl X}
$.

\end{defn}

\begin{figure}[h!]

\begin{center}

\begin{tikzpicture}[scale=.6]

\draw[thick] (0,0) circle (.35);

\draw (.04,-.05) node {$y$};

\draw[very thick,->] (.5,0) -- (2.5,0);

\draw (1.5,-.5) node {$S_\cl Y$};

\draw[thick] (3,0) circle (.35);

\draw (3+.03,0) node {$y'$};

\draw[thick] (0,3) circle (.35);

\draw (.04,3-.04) node {$x$};

\draw[very thick,->,dashed] (.5,3) -- (2.5,3);

\draw (1.5,3.5) node {$S_\cl X$};

\draw[very thick,->] (0,2.5) -- (0,.5);

\draw (-.5,1.5) node {${\simrel}$};

\draw[thick,dashed] (3,3) circle (.35);

\draw (3+.03,3) node {$x'$};

\draw[very thick,->,dashed] (3,2.5) -- (3,.5);

\draw (3.5,1.5) node {${\simrel}$};

\end{tikzpicture}

\end{center}

\caption{If ${\simrel} \subseteq |\cl X| \times |\cl Y|$ is a dynamical simulation, this diagram can always be completed.}

\end{figure}
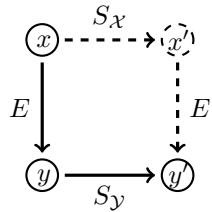

\ignore{
The following properties are readily verified:

\begin{lem}\label{LemmPropSim}

Let ${\cl X},{\cl Y},{\cl Z}$ be labelled systems and $\simrel\subseteq |{\cl X}|\times  |{\cl Y}|$, $\xi\subseteq |{\cl Y}|\times  |{\cl Z}|$ be simulations. Then:

\begin{enumerate}

\item\label{ItPropSimComp} $\xi\simrel\subseteq |{\cl X}|\times  |{\cl Z}|$ is a simulation. Moreover, if both $\simrel$ and $\xi$ are dynamic, then so is $\xi\simrel$.
\item \label{ItPropSinSub} If $U\subseteq |{\cl X}|$ and $V\subseteq |{\cl Y}|$ are open, then $\simrel\upharpoonright U\times V$ is a simulation.
\item\label{ItPropSimUn} If $\Xi\subseteq\mathcal P(|{\cl X}|\times  |{\cl Y}|)$ is a set of simulations, then $\bigcup \Xi$ is also a simulation.

\end{enumerate}

\end{lem}
}

The following is proven in \cite{FernandezITLc}. While the details of the construction given there are not important for our current purposes, the interested reader may find an overview in Appendix \ref{secFinFrm}.
Below, recall that $\Sigma \Subset \landi$ means that $\Sigma$ is finite and closed under subformulas.

\begin{theorem}\label{thmSurjI}
Given $\Sigma\Subset \cl L_\diam$, there exists a finite weak quasimodel $\irr\Sigma$
such that if $\cl A$ is any deterministic weak quasimodel then ${\rightbarharpoon} \subseteq |\irr\Sigma| \times |\cl A|$ is a surjective dynamic simulation.
\end{theorem}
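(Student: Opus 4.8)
The plan is to recall that Theorem \ref{thmSurjI} is a result from \cite{FernandezITLc}, so strictly speaking no new argument is required; however, since the statement has been recast in the two-sided-type framework and restricted to $\cl L_\diam$, I would indicate how the construction goes and why the claimed properties hold. The object $\irr\Sigma$ is built by an inductive saturation procedure: one starts from the collection of all two-sided $\Sigma$-types arranged into candidate ``moments'' (small labelled trees, as alluded to in the introduction), and closes under the requirements needed for a weak quasimodel — forward confluence of the $\peq$-order inside each moment (Definition \ref{frame}\ref{cond:frame:imp}), sensibility of the temporal relation (Definition \ref{compatible}), and the confluence square of Figure 1. Finiteness is immediate because $\Sigma$ is finite, hence $\type\Sigma$ is finite, and the moments one forms are of bounded size. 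An overview is deferred to Appendix \ref{secFinFrm}, so here I would only state that $\irr\Sigma$ is the finite weak quasimodel produced by that construction.

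The substantive claim is that for \emph{every} deterministic weak quasimodel $\cl A$, the relation $\rightbarharpoon$ (the maximal dynamic simulation, i.e.\ the union of all dynamic simulations from $\irr\Sigma$ to $\cl A$, which is again a dynamic simulation since unions of simulations are simulations) is \emph{surjective} onto $|\cl A|$. I would prove surjectivity by exhibiting, for an arbitrary $a \in |\cl A|$, some $w \in |\irr\Sigma|$ with $w \rightbarharpoon a$. The key point is that $\irr\Sigma$ was constructed to contain a representative of every ``locally realizable'' type-pattern over $\Sigma$: given $a$, one reads off the restriction to $\Sigma$ of the label $\ell_{\cl A}(a)$ together with the labels of the $\peq$-successors of $a$ that witness clause \ref{cond:frame:imp}, and this finite configuration (truncated to $\Sigma$) is, by the closure properties built into the construction, exactly one of the moments of $\irr\Sigma$; matching that moment to the corresponding configuration around $a$ gives a local simulation, and the union over all $a$ assembles into the global dynamic simulation witnessing surjectivity. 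Determinism of $\cl A$ is used precisely so that the forward-confluence square has a \emph{canonical} completion, which is what lets the local simulations be glued coherently and lets the back-and-forth chase for the dynamic condition $S_{\cl A}\mathord{\rightbarharpoon} \subseteq {\rightbarharpoon}\, S_{\irr\Sigma}$ terminate.

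The main obstacle is this surjectivity-plus-dynamicity argument: one must show not merely that each single world $a$ is simulated, but that the witnessing relation can be taken to be a single \emph{dynamic} simulation, so the local matchings must be compatible across the $S_{\cl A}$-steps. This is where the inductive construction of $\irr\Sigma$ in \cite{FernandezITLc} does the real work — the moments are saturated so that every $S$-successor required by sensibility and $\omega$-type-realizability is already present — and I would lean on that construction (cf.\ Appendix \ref{secFinFrm}) rather than reprove it. The adaptation from one-sided to two-sided $\Sigma$-types is purely notational, via the translation noted after Definition \ref{def:type} ($\Psi^+ = \Phi$, $\Psi^- = \Sigma\setminus\Phi$), and the restriction from $\cl L$ to $\cl L_\diam$ only removes cases from the type conditions, so neither affects the argument.
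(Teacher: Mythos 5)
Your proposal matches the paper's own treatment: Theorem \ref{thmSurjI} is simply imported from \cite{FernandezITLc}, and the paper likewise only reviews the moment-based construction in Appendix \ref{secFinFrm} with all proofs explicitly omitted, so deferring the saturation/surjectivity argument to that source is exactly what the authors do. The one caveat is that finiteness is not ``immediate'' as you claim --- the paper notes that moments can be arbitrarily large, and the cited construction obtains finiteness only by restricting to \emph{irreducible} moments with a superexponential size bound --- but since you defer to the same source for the details, this is a presentational slip rather than a gap.
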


Points of $\irr\Sigma$ are called {\em moments.}
One can think of $\irr\Sigma$ as a finite initial structure over the category of labelled weak quasimodels. 
Next, we will internalize the notion of simulating elements of $\irr\Sigma$ into the temporal language. This is achieved by the formulas $\Sim w$ given by the next proposition.

\begin{prop}\label{simulability}
Given $\Sigma \Subset   \landi$ and a finite $\Sigma$-labelled frame $\cl W$, there exist formulas $(\Sim w)_{w\in |\cl W|}$ such that for any fully labelled frame $\cl X$, $w\in |\cl W|$ and $x\in |\cl X|$, $\Sim w \in \ell^-(x)$ if and only if there is $y\seq x$ such that $(\cl W,w) \simu (\cl X, y)$.
\end{prop}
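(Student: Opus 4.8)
The plan is to construct the family $(\Sim w)_{w \in |\cl W|}$ by a simultaneous recursion mirroring the recursive structure of simulations. Since $\cl W$ is finite, the relation ``$(\cl W, w) \simu (\cl X, y)$'' unfolds into two requirements: a local condition ($\ell_{\cl W}(w) \subT \ell_{\cl X}(y)$, i.e.\ $\ell^+_{\cl W}(w) \subseteq \ell^+_{\cl X}(y)$ and $\ell^-_{\cl W}(w) \subseteq \ell^-_{\cl X}(y)$) and a forward-confluence condition on $\peq$ (for every $\peq$-successor $w'$ of $w$ in $\cl W$, some $\peq$-successor $y'$ of $y$ in $\cl X$ still simulates $w'$). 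This last clause is inherently a ``for all successors there is a matching one'' statement, which under the intuitionistic reading of $\to$ is exactly what $\Sim{w'} \to (\cdots)$ expresses when placed inside the implication. So the natural guess is to set, schematically,
\[
\Sim w \ \eqdef\ \Bigl(\bigwedge \ell^+_{\cl W}(w)\Bigr) \wedge \Bigl(\bigwedge_{w' \,:\, w \peq_{\cl W} w'} \bigl(\text{something involving }\Sim{w'}\bigr)\Bigr) \ \longrightarrow\ \bigvee \ell^-_{\cl W}(w),
\]
refined so that the recursion is well-founded and so that the ``failure of $\Sim w$ at $x$'' captures the existence of a $\seq$-successor $y$ of $x$ with a simulation into it. The detailed syntactic shape is carried out in Appendix~\ref{secsubchar}; for the proposal it suffices to identify that such formulas exist and to verify the stated equivalence.

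The proof of the equivalence then splits into the two implications, each argued by induction. For the direction ``if there is $y \seq x$ with $(\cl W, w) \simu (\cl X, y)$, then $\Sim w \in \ell^-(x)$'', I would argue contrapositively or directly using condition~\ref{cond:frame:imp} of Definition~\ref{frame}: since $\cl X$ is a labelled frame, $\Sim w \in \ell^-(x)$ is witnessed by the existence of a $\peq$-point verifying the antecedent and falsifying the consequent, and one feeds the simulation $\simrel$ into this, using forward-confluence of $\simrel$ to pass from successors of $w$ to successors of $y$ and the inductive hypothesis on the (strictly smaller, w.r.t.\ a suitable well-founded order on $|\cl W|$) points $w'$. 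The local clause handles $\bigwedge \ell^+_{\cl W}(w) \subseteq \ell^+(y)$ and $\bigvee \ell^-_{\cl W}(w)$ via $\ell_{\cl W}(w) \subT \ell_{\cl X}(y)$. For the converse, ``if $\Sim w \in \ell^-(x)$ then some $y \seq x$ is simulated'', I would unpack the definition of $\ell^-$ at an implication: there is $y \seq x$ with the antecedent of $\Sim w$ in $\ell^+(y)$ and $\bigvee \ell^-_{\cl W}(w) \in \ell^-(y)$; the positive part of the antecedent gives $\ell^+_{\cl W}(w) \subseteq \ell^+(y)$, the negative disjunction gives $\ell^-_{\cl W}(w) \subseteq \ell^-(y)$ (using that $\cl X$ is a type-labelled frame so $\bigvee \ell^-_{\cl W}(w) \in \ell^-(y)$ forces each disjunct into $\ell^-(y)$), and the recursive conjuncts $\Sim{w'}$-clauses, combined with the inductive hypothesis, let one define $\simrel$ as the union over all such witnessing pairs, checking it is a simulation. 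One should then take $\simrel$ to be the relation $\{(w', y') : \Sim{w'} \text{-witness at } y'\}$ assembled across the recursion, and verify forward-confluence of $\simrel$ directly from how the conjuncts were set up.

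The main obstacle, and the reason the precise syntax in the appendix matters, is the \emph{well-foundedness of the recursion}: if $\peq_{\cl W}$ were merely a preorder with cycles (or even just because $w \peq_{\cl W} w$ always holds reflexively), a naive definition of $\Sim w$ in terms of $\Sim{w'}$ for $\peq_{\cl W}$-successors $w'$ would be circular. Since $\peq_{\cl W}$ is a partial order on a finite set, one can stratify $|\cl W|$ by the height of a point in the order and define $\Sim w$ in terms of $\Sim{w'}$ only for $w' \succ_{\cl W} w$ strictly, handling the reflexive ``$w' = w$'' contribution by the local clause alone; making sure the resulting formula still captures the reflexive requirement of a simulation at $w$ itself (not just at strict successors) is the delicate bookkeeping step. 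A secondary point to be careful about is that $\cl W$ is only assumed to be a labelled \emph{frame}, not a weak quasimodel, so there is no $S_{\cl W}$ to worry about here — the temporal/dynamic content does not enter Proposition~\ref{simulability} at all, only the intuitionistic $\peq$-structure — which keeps the induction purely about the poset reduct and its interaction with clause~\ref{cond:frame:imp} of Definition~\ref{frame}.
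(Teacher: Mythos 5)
Your high-level strategy coincides with the paper's: recursion on the finite poset $|\cl W|$ along the strict order $\succ$, each direction of the equivalence proved by (backward) induction, the simulation assembled as $\lbrace (w,y)\rbrace$ together with the simulations obtained inductively for the strict successors, and the observation that no temporal structure enters. The genuine gap is in the \emph{placement} of the recursive occurrences of $\Sim{w'}$. You put them, as conjuncts of ``something involving $\Sim{w'}$'', into the \emph{antecedent} of the implication. What the argument needs at the witnessing point $y\seq x$ is exactly that $\Sim{v}\in\ell^-(y)$ for every $v\succ w$ (by the induction hypothesis this is what certifies that $v$ is simulated somewhere above $y$). A conjunct $\chi$ of the antecedent contributes the condition $\chi\in\ell^+(y)$, and there is no intuitionistic formula $\chi$ for which $\chi\in\ell^+(y)$ is equivalent to $\Sim{v}\in\ell^-(y)$: the natural candidate $\neg\Sim{v}$ gives only one implication ($\neg\Sim{v}\in\ell^+(y)$ forces $\Sim{v}\in\ell^-(y)$, but not conversely), so the direction ``simulation exists $\Rightarrow\Sim{w}\in\ell^-(x)$'' breaks, since from $(\cl W,w)\simu(\cl X,y)$ you obtain $\Sim{v}\in\ell^-(y)$ but cannot conclude that the antecedent of your formula lies in $\ell^+(y)$, hence cannot exhibit $y$ as a witness. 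The repair, which is the paper's actual definition, is to put the recursive calls as \emph{disjuncts of the consequent}:
\[
\Sim w \;=\; \bigwedge \ell^+_{\cl W}(w) \,\to\, \bigvee \ell^-_{\cl W}(w) \vee \bigvee_{v \succ w} \Sim v .
\]
Then $\Sim w\in\ell^-(x)$ unpacks, via condition~\ref{cond:frame:imp} of Definition~\ref{frame}, to: there is $y\seq x$ with $\bigwedge\ell^+_{\cl W}(w)\in\ell^+(y)$ and every disjunct of the consequent in $\ell^-(y)$, which is precisely the local condition $\ell_{\cl W}(w)\subT\ell_{\cl X}(y)$ together with $\Sim v\in\ell^-(y)$ for each $v\succ w$. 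Your gloss that the confluence clause ``is exactly what $\Sim{w'}\to(\cdots)$ expresses'' points in the same wrong direction: no nested implication is needed or wanted.

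A secondary comment: you single out well-foundedness of the recursion as the delicate step, and your treatment of it (recurse only on $v\succ w$ strictly, handle the reflexive contribution by the local clause plus the pair $(w,y)$ in the assembled relation) is correct and matches the paper. But since $\peq_{\cl W}$ is a partial order on a finite set this is routine; the real subtlety is the antecedent-versus-consequent issue above, which is forced on you by the asymmetry of intuitionistic implication.
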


\proof
An explicit construction is given in Appendix \ref{secsubchar}.
\endproof

The next proposition allows us to emulate model-theoretic reasoning within $\landi$.



\begin{prop}\label{propsub}
Fix $\Sigma \Subset \landi$ and let $\cl I = \irr\Sigma $, $w\in |\irr{}|$ and $\psi\in \Sigma $.
\begin{enumerate}
\item\label{itPropsubOne}
If $\psi\in \ell^-({{w}})$, then $\vdash \psi \to \mathrm{Sim}({{w}}) $.

\item\label{itPropsubOneb} If $\psi\in \ell^+({w})$, then $\vdash \big (\psi \to \Sim {{w}} \big )\to \Sim w $.

\item\label{itPropsubThree} If ${{{w}}}\peq{{v}}$, then $\vdash \mathrm{Sim}({{{v}}})\to \mathrm{Sim}({{w}})$.

\item\label{itPropsubFour} $\vdash \displaystyle \bigwedge_{\psi \in \ell_{\irr{}}^- ({{w}})}  \mathrm{Sim}({{w}}) \rightarrow \psi.$

\item\label{itPropsubFive} $\vdash\displaystyle \tnext\bigwedge _{{{w}} \mathrel S_{\irr{}} {{{v}}}  }\mathrm{Sim}({{{v}}}) \to \mathrm{Sim}({{w}}).$
\end{enumerate}
\end{prop}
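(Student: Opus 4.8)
The plan is to prove the five items essentially in the order they are stated, using Proposition~\ref{simulability} as the workhorse: it converts the semantic condition ``some $y \seq x$ simulates $w$'' into the syntactic fact $\Sim w \in \ell^-(x)$, and this will be applied inside the canonical model $\CMod$, where $\ell_\CIcon$ is the identity, so that membership in $\ell^-(\Phi)$ coincides with membership in $\Phi^-$. The general strategy for each item is: (i) take an arbitrary prime type $\Phi$ witnessing non-derivability of the desired implication (via the Lindenbaum Lemma, Lemma~\ref{LemmLind}, one puts the antecedent in $\Phi^+$ and the consequent in $\Phi^-$); (ii) translate each ``$\Sim u \in \Phi^-$'' or ``$\Sim u \in \Phi^+$'' into the existence or non-existence of a simulation from $u$ into $(\CMod,\Phi)$ or some $\peq$-successor of it; (iii) derive a contradiction from the structural properties of simulations and weak quasimodels together with the fact, from Proposition~\ref{prop:CisW}, that $\CMod$ is a deterministic weak quasimodel, so that Theorem~\ref{thmSurjI} applies and gives a (surjective, dynamic) simulation ${\rightbarharpoon}$ from $\irr\Sigma$ into $\CMod$.

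For item~\ref{itPropsubOne}, suppose $\psi \in \ell^-(w)$ but $\psi \to \Sim w$ is not derivable; get $\Phi$ with $\psi \in \Phi^+$ and $\Sim w \in \Phi^-$. By Proposition~\ref{simulability} there is $\Psi \seq \Phi$ with $(\irr\Sigma,w)\simu(\CMod,\Psi)$; since simulations respect $\subT$ and $\psi \in \ell^-(w)$, we get $\psi \in \Psi^-$; but $\psi \in \Phi^+ \subseteq \Psi^+$ (monotonicity along $\peq$) contradicts $\Psi^- \cap \Psi^+ = \varnothing$. Item~\ref{itPropsubThree} is the pure-simulation statement: if $w \peq v$, then $\Sim v \in \ell^-(x)$ gives $y \seq x$ with $(\irr\Sigma,v)\simu(\cl X,y)$; precomposing with the trivial simulation $w \simrel v$ induced by $w\peq v$ (which is legitimate since $\ell(w)\peqT\ell(v)$ hence $\ell^-(v)\subseteq\ell^-(w)$, so $\ell(w)\subT\ell(v)$) yields $(\irr\Sigma,w)\simu(\cl X,y)$, so $\Sim w \in \ell^-(x)$; one then invokes Proposition~\ref{simulability} in the converse direction, or more directly uses the canonical model as above to get derivability. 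Item~\ref{itPropsubFour} follows from item~\ref{itPropsubOne} applied to each $\psi\in\ell^-(w)$ and a finite conjunction, together with the trivial observation that $\bigwedge_\psi(\psi\to\Sim w)$ is equivalent to $(\bigvee_\psi\psi)\to\Sim w$ — actually one wants the contrapositive direction, so the cleanest route is: $\Sim w \in \Phi^-$ forces, via a simulation into $\Phi$, that $\ell^-(w)\subseteq\Phi^-$, hence every $\psi\in\ell^-(w)$ lies in $\Phi^-$, which is exactly what $\bigwedge_{\psi\in\ell^-(w)}(\Sim w \to \psi)$ asserts when read through Lindenbaum.

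The two genuinely non-routine items are \ref{itPropsubOneb} and \ref{itPropsubFive}, and I expect \ref{itPropsubFive} to be the main obstacle. For \ref{itPropsubOneb}, suppose $\psi \in \ell^+(w)$ and $(\psi \to \Sim w)\to\Sim w$ is not derivable; get $\Phi$ with $\psi\to\Sim w \in \Phi^+$ and $\Sim w \in \Phi^-$. From $\Sim w\in\Phi^-$ and Proposition~\ref{simulability}, there is $\Psi\seq\Phi$ simulating $w$; the point is to choose $\Psi$ so that it also fails $\Sim w$ yet $\psi\in\Psi^+$ — indeed, since $w$ is simulated at $\Psi$ and $\psi\in\ell^+(w)$, we get $\psi\in\Psi^+$; but $\psi\to\Sim w\in\Phi^+\subseteq\Psi^+$ and condition~\ref{cond:type:implication} of Definition~\ref{def:type} then force $\psi\in\Psi^-$ or $\Sim w\in\Psi^+$, both contradictions (the first with $\psi\in\Psi^+$, the second with the fact that $\Psi$ simulates $w$, which via Proposition~\ref{simulability} read at $\Psi$ itself puts $\Sim w\in\Psi^-$). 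For \ref{itPropsubFive}, the plan is: assume $\tnext\bigwedge_{w\ST v}\Sim v \to \Sim w$ is not derivable, obtain $\Phi$ with $\tnext\Sim v\in\Phi^+$ for all $\ST$-successors $v$ of $w$ and $\Sim w\in\Phi^-$; by Proposition~\ref{simulability} there is $\Psi\seq\Phi$ with $(\irr\Sigma,w)\simu(\CMod,\Psi)$; apply $S_\CIcon$ (a total function on prime types) to get $\Psi' = S_\CIcon(\Psi)$, and use that $\tnext\Sim v\in\Phi^+\subseteq\Psi^+$ implies $\Sim v\in\Psi'^+$ (clause~\ref{ItCompOne} of Definition~\ref{compatible}, since $\Psi\ST\Psi'$). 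The crux is to produce from the simulation at $\Psi$ and the sensibility relation a single $\ST$-successor $v$ of $w$ in $\irr\Sigma$ that is simulated at $\Psi'$: this is exactly where the dynamic part of the simulation and forward-confluence of $S$ in $\irr\Sigma$ are needed — the diagram completing $w \mathrel S_{\irr\Sigma} v$, $\Psi \mathrel S_\CIcon \Psi'$, $w\simrel\Psi$, $v\simrel\Psi'$ — and then $\Sim v\in\Psi'^+$ (from the $\Phi$-side) clashes with $\Sim v\in\Psi'^-$ (from the simulation at $\Psi'$, via Proposition~\ref{simulability}). The delicate point, and the one I would spend the most care on, is checking that the simulation $w\simrel\Psi$ is (or can be taken to be) dynamic so that the successor $v$ exists with $(\irr\Sigma,v)\simu(\CMod,\Psi')$; if the simulation handed back by Proposition~\ref{simulability} is not a priori dynamic, one recovers dynamicity by invoking Theorem~\ref{thmSurjI} to replace it, using that $\CMod$ is a deterministic weak quasimodel and that $\irr\Sigma$ is initial, composing the surjective dynamic simulation from $\irr\Sigma$ into $\CMod$ with the structural maps appropriately.
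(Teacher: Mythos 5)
Your treatment of items \ref{itPropsubOne}, \ref{itPropsubOneb} and \ref{itPropsubFive} matches the paper's proof, including the key point for item \ref{itPropsubFive}: the dynamicity needed to push the simulation along $S_\CIcon$ is not supplied by Proposition \ref{simulability} but by Theorem \ref{thmSurjI}, which makes the relation $\simu$ itself a dynamic simulation into the deterministic weak quasimodel $\CMod$. Items \ref{itPropsubThree} and \ref{itPropsubFour}, however, contain genuine errors.

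For item \ref{itPropsubThree} your argument runs in the wrong direction and rests on a false auxiliary claim. To get $\vdash \Sim{v}\to\Sim{w}$ one must show that $\Sim{w}\in\Gamma^-$ implies $\Sim{v}\in\Gamma^-$ for every prime $\Gamma$ (so that a Lindenbaum witness with $\Sim{v}\in\Gamma^+$ and $\Sim{w}\in\Gamma^-$ is inconsistent); you argue instead that $\Sim{v}\in\ell^-(x)$ implies $\Sim{w}\in\ell^-(x)$, which would yield the converse implication $\vdash\Sim{w}\to\Sim{v}$. Moreover, the ``trivial simulation'' $\{(w,v)\}$ you precompose with is not a simulation: $w\peq v$ gives $\ell(w)\peqT\ell(v)$, i.e.\ $\ell^-(v)\subseteq\ell^-(w)$, which is the \emph{opposite} of the inclusion $\ell^-(w)\subseteq\ell^-(v)$ demanded by $\subT$ (and $\{(w,v)\}$ is not forward-confluent either). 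The correct argument applies forward confluence of the simulation witnessing $\Sim{w}\in\Gamma^-$: from $w\simrel\Theta$ with $\Theta\seq\Gamma$ and $w\peq v$ one obtains $\Delta\seq\Theta$ with $v\simrel\Delta$, whence $\Sim{v}\in\Gamma^-$ by Proposition \ref{simulability} and persistence.

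For item \ref{itPropsubFour}, the displayed formula is admittedly garbled, but the intended statement (as its use in Theorem \ref{theocomp} shows) is $\vdash\bigwedge_{w:\,\psi\in\ell^-(w)}\Sim{w}\to\psi$, the conjunction ranging over \emph{all moments refuting $\psi$}. Your reading, $\vdash\Sim{w}\to\psi$ for each $\psi\in\ell^-(w)$, is false in general ($\Sim{w}$ is itself an implication and does not entail a variable $\psi\in\ell^-(w)$), and your ``cleanest route'' establishes $\Sim{w}\in\Phi^-\Rightarrow\psi\in\Phi^-$, which is just item \ref{itPropsubOne} again. The missing ingredient is the \emph{surjectivity} of $\simu$ from Theorem \ref{thmSurjI}: if $\Gamma^+$ contains $\Sim{w}$ for every $w$ with $\psi\in\ell^-(w)$, surjectivity yields some $w_0$ with $(\irr\Sigma,w_0)\simu(\CMod,\Gamma)$, hence $\Sim{w_0}\in\Gamma^-$, so $w_0$ is not among the conjoined moments, so $\psi\in\ell^+(w_0)\subseteq\Gamma^+$.
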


\proof

\noindent \eqref{itPropsubOne}
First assume that $\psi\in \ell^- ({w})$, and toward a contradiction that $\nvdash \psi \to \Sim{{w}}$. By the Lindenbaum lemma there is $\Gamma \in | \CMod | $ such that $\psi \to \Sim{{w}} \in \Gamma^-$. Thus for some $\Theta \seq_\CIcon \Gamma$ we have that $\psi \in \Theta^+$ and $\Sim{{w}} \in \Theta^-$. But then by Proposition \ref{simulability} we have that $(\cl W,{w}) \simu (\CMod,\Delta)$ for some $\Delta \seq_\CIcon \Theta$, so that $\psi\in \Delta^-$, and by upwards persistence $\psi \in \Theta^-$, contradicting the consistency of $\Theta$.
\medskip

\noindent \eqref{itPropsubOneb} If $\psi\in \ell^+ ({w})$, we proceed similarly. Assume toward a contradiction that $\nvdash \big ( \psi \to \Sim{{w}} \big ) \to \Sim w$. Then, reasoning as above there is $\Theta \in |\CMod|$ such that $\psi \to \Sim{{w}} \in \Theta^+$ and $\Sim w \in \Theta^-$. From Proposition \ref{simulability} we see that there is $\Delta \seq_c \Theta$ such that $(\cl W,w) \simu (\CMod,\Delta)$, so that $\psi \in \Delta^+$ and, once again by Proposition \ref{simulability}, $\Sim w \in \Delta^-$. It follows that $\psi \to \Sim w \not \in \Delta^+$; but in view of upward persistence, this contradicts that $\psi \to \Sim{{w}} \in \Theta^+$.

\medskip


\noindent \eqref{itPropsubThree} Suppose that ${v}\seq{w}$. Reasoning as above, it suffices to show that if $\Gamma \in |\CMod|$ is such that $\Sim{{w}} \in \Gamma ^-$, then also $\Sim{{v}} \in \Gamma ^-$. But if $\Sim{{w}} \in \Gamma ^-$, there is $\Theta \seq _\CIcon \Gamma$ such that $(\irr{},w) \simu (\CMod, \Theta)$. By forward confluence $(\irr{},v) \simu (\CMod, \Delta)$ for some $\Delta \seq_\CIcon \Theta$. Thus by Proposition \ref{simulability}, $\Sim {v} \in \Delta^-$ and by upwards persistence $\Sim{v} \in \Gamma^-$. Since $\Gamma \in |\CMod| $ was arbitrary, the claim follows.
\medskip

\noindent \eqref{itPropsubFour}
We prove that if $\Gamma \in |\CMod|$ is such that
\begin{equation}\label{EqGammaConjunc}
\bigwedge _{\psi\in \ell^- ({{w}})} \mathrm{Sim}({{w}})  \in \Gamma ^+,
\end{equation}
then $\psi \in \Gamma^+$. If \eqref{EqGammaConjunc} holds then by Theorem \ref{thmSurjI}, there is ${w} \in |\irr{}|$ with $(\irr{}, w) \simu (\CMod,\Gamma)$. By Proposition \ref{simulability}, $\Sim {w} \in \Gamma^-$, hence it follows from \eqref{EqGammaConjunc} that $\psi \not \in \ell^- ({{w}})$; but ${w}$ is $\Sigma$-typed and $\psi \in \Sigma$, so $\psi  \in \ell^+ ({{w}})$ and thus $\psi \in \Gamma^+$, as required.
\medskip

\noindent \eqref{itPropsubFive} Suppose that $\Gamma \in |\CMod|$ is such that
\begin{equation}\label{EqGammaCirc}
\tnext\bigwedge _{{{w}} \mathrel S_{\irr{}} {{{v}}}   }\Sim{{{v}}} \in \Gamma^+ ,
\end{equation}
and assume toward a contradiction that $\Sim{{w}} \in \Gamma^-$. Then $(\irr{}, w) \simu (\CMod,\Delta)$ for some $\Delta \seq_\CIcon \Gamma$. Since $\simu$ is a dynamic simulation, it follows that there is $v \in |\irr{}|$ with ${w} \mathrel S_{\irr{}} v$ and $(\irr{}, v) \simu  \big ( \CMod, S_\CIcon (\Delta) \big ) $. But $\Delta\seq_\CIcon \Gamma$, so that by Proposition \ref{simulability}, $\Sim v \in S^-_\CIcon (\Gamma)$, contradicting \eqref{EqGammaCirc}. 
\endproof

\section{The initial quasimodel}\label{seccan}

We are now ready to define our initial quasimodels. Given a finite set of formulas $\Sigma$, we will define a quasimodel $\cqm\Sigma$ falsifying all unprovable $\Sigma$-types. This quasimodel is a substructure of $\irr\Sigma$, containing only moments which are {\em possible} in the following sense:

\begin{defn}\label{defsound}
Fix $\Sigma\Subset \landi$. We say that a moment ${{w}} \in |\irr\Sigma|$ is {\em possible} if $\not \vdash \mathrm{Sim}({{w}})$, and denote the set of possible $\Sigma$-moments by $\unp\Sigma$.
\end{defn}

With this we are ready to define our initial structure, which as we will see later is indeed a quasimodel.

\begin{defn}
Given $\Sigma \Subset\landi$, we define the {\em initial structure} for $\Sigma$ by $\cqm \Sigma = \irr {\Sigma} \upharpoonright \unp\Sigma$.
\end{defn}

Our strategy from here on will be to show that canonical structures are indeed quasimodels; once we establish this, completeness of $\logbasic $ is an easy consequence. The most involved step will be showing that the successor relation on ${\cqm{\Sigma}}$ is $\omega$-sensible, but we begin with some simpler properties.

\begin{lem}\label{niceprop}
Let $\Sigma$ be a finite set of formulas, $\irr{} = \irr\Sigma$ and $\cqm{} = \cqm \Sigma$. Then, $|\cqm{}|$ is an upward-closed subset of $|\irr{}|$ and $S_{\cqm{}}$ is serial.
\end{lem}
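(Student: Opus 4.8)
The statement has two parts, and I would treat them separately. For upward-closedness of $|\cqm{}| = \unp\Sigma$ inside $|\irr{}|$, recall that $\unp\Sigma$ consists of the moments $w$ with $\nvdash \Sim w$. So suppose $w \in \unp\Sigma$ and $w \peq_{\irr{}} v$; I must show $v \in \unp\Sigma$, i.e. $\nvdash \Sim v$. This is immediate from Proposition~\ref{propsub}\eqref{itPropsubThree}: since $w \peq v$, we have $\vdash \Sim v \to \Sim w$, so if we had $\vdash \Sim v$ then $\vdash \Sim w$ by \ref{ax13MP}, contradicting $w \in \unp\Sigma$. Hence $v \in \unp\Sigma$, and $\unp\Sigma$ is upward closed.

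\textbf{Seriality of $S_{\cqm{}}$.} Here I must show that every $w \in \unp\Sigma$ has an $S_{\cqm{}}$-successor, i.e. some $v \in \unp\Sigma$ with $w \mathrel S_{\irr{}} v$. Since $\irr\Sigma$ is a weak quasimodel (Theorem~\ref{thmSurjI}), its successor relation $S_{\irr{}}$ is at least serial in the sense guaranteed for weak quasimodels --- actually I should be a bit careful, since a \emph{weak} quasimodel per Definition~\ref{def:quasimodel} is only required to be forward-confluent and sensible, not serial; but the intended reading (and what Appendix~\ref{secFinFrm} provides) is that $\irr\Sigma$ does have successors for each moment, or at least that the relevant moments do. Granting that $w$ has at least one $S_{\irr{}}$-successor, the real content is that \emph{at least one} such successor lies in $\unp\Sigma$. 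Suppose for contradiction that every $v$ with $w \mathrel S_{\irr{}} v$ satisfies $\vdash \Sim v$. Then $\vdash \bigwedge_{w \mathrel S_{\irr{}} v} \Sim v$ (a finite conjunction, since $\irr\Sigma$ is finite), so by \ref{ax14NecCirc} we get $\vdash \tnext \bigwedge_{w \mathrel S_{\irr{}} v} \Sim v$. But then Proposition~\ref{propsub}\eqref{itPropsubFive} gives $\vdash \Sim w$, contradicting $w \in \unp\Sigma$. Hence some successor $v$ of $w$ has $\nvdash \Sim v$, i.e. $v \in \unp\Sigma$, and since $w \mathrel S_{\irr{}} v$ with both endpoints in $\unp\Sigma$ we get $w \mathrel S_{\cqm{}} v$ by the definition of the restriction. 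Thus $S_{\cqm{}}$ is serial.

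\textbf{Main obstacle.} Neither half requires real ingenuity once Proposition~\ref{propsub} is in hand; the work is essentially contrapositive bookkeeping. The one subtlety worth flagging is the seriality argument's implicit use of the fact that $w$ has \emph{some} $S_{\irr{}}$-successor to begin with: if $\irr\Sigma$ genuinely contained a dead end, the conjunction $\bigwedge_{w \mathrel S_{\irr{}} v}\Sim v$ would be empty, hence $\top$, and $\tnext\top$ is provable (from \ref{ax14NecCirc}), so Proposition~\ref{propsub}\eqref{itPropsubFive} would immediately yield $\vdash \Sim w$ --- meaning a dead-end moment is automatically \emph{impossible} and thus already excluded from $\unp\Sigma$. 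So in fact the argument is robust: either $w$ has an $S_{\irr{}}$-successor in $\unp\Sigma$, or $w \notin \unp\Sigma$ in the first place. I would present the seriality half exactly this way, invoking Proposition~\ref{propsub}\eqref{itPropsubFive} with the (possibly empty) finite conjunction and deriving the contradiction uniformly; this is the only point where a careless reader might worry, so it deserves an explicit sentence.
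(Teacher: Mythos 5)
Your proof is correct and follows essentially the same route as the paper's: upward-closedness via Proposition~\ref{propsub}.\ref{itPropsubThree} and seriality via the contrapositive of Proposition~\ref{propsub}.\ref{itPropsubFive} together with necessitation. Your extra remark about the empty-conjunction case (a dead-end moment being automatically impossible since $\bigwedge\varnothing=\top$ and $\vdash\tnext\top$) is a sensible clarification that the paper leaves implicit, but it does not change the argument.
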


\proof
To check that $|\cqm{}|$ is upward closed, let ${{w}}\in |\cqm{}|$ and suppose ${{{v}}}\seq{{w}}$. Now, by Proposition \ref{propsub}.\ref{itPropsubThree}, we have that
\[\vdash\mathrm{Sim}({{{v}}})\to \mathrm{Sim}({{w}});\]
hence if ${{w}}$ is possible, so is ${{{v}}}$.

To see that $S_{\cqm {} }$ is serial, observe that by Proposition \ref{propsub}.\ref{itPropsubFive}, if  ${{w}}\in |\cqm{}|$
for all ${{w}}\in|\irr{}|$,
\[\vdash \tnext\bigwedge_{{{w}}\mathrel S_{\irr{}} {{{v}}} }\mathrm{Sim}({{{v}}}) \to \mathrm{Sim}({{w}});\]
since ${{w}}$ is possible, it follows that for some ${{{v}}}$ with ${{w}} \mathrel S_{\irr{}}  {{{v}}}$, ${{{v}}}$ is possible as well, and thus ${{{v}}}\in|\cqm{}|.$
\endproof

\section{$\omega$-Sensibility}\label{secOmSens}

In this section we will show that $S_{\cqm{}}$ is $\omega$-sensible, the most difficult step in proving that $\cqm{}$ is a quasimodel. In other words, we must show that, given ${{w}}\in |\cqm{}|$ and $\diam \psi\in \ell^+({{w}})$, there is a finite path
\[{{w}}={{w}}_0 \mathrel S {{w}}_1 \mathrel S \hdots \mathrel S {{w}}_n,\]
where $\psi\in \ell^+ ({{w}}_n)$ and ${{w}}_i\in |\cqm{}|$ for all $i\leq n$.

\begin{defn} 
Let $\Sigma\Subset \landi$ and ${{w}},v\in \unp\Sigma$. Say that ${{{v}}}$ is {\em reachable} from ${{w}}$ if there is a finite path 
\[\overrightarrow {u}=\<u_0,...,u_n\>\]
of possible moments with $u_0={{w}}$, $u_n={{{v}}}$, and $u_i \mathrel S u_{i+1}$ for all $i<n$.
We denote the set of all possible moments that are reachable from ${{w}}$ by $R(w)$.
\end{defn}

\begin{lem}\label{syntactic}
If $\Sigma\Subset\landi$ and ${{w}}\in|\cqm\Sigma|$ then 
\[\vdash \tnext \bigwedge _{{{{v}}}\in R({{w}})}\mathrm{Sim}({{{v}}})\to \bigwedge_{{{{v}}}\in{R}({{w}})}\mathrm{Sim}({{{v}}}).\]
\end{lem}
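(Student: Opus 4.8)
The plan is to apply the induction rule \ref{ax12:ind:2} with $\varphi \eqdef \bigwedge_{v\in R(w)}\mathrm{Sim}(v)$. Concretely, it suffices to establish that
\[
\vdash \tnext\!\!\bigwedge_{v\in R(w)}\!\!\mathrm{Sim}(v) \;\to\; \bigwedge_{v\in R(w)}\!\!\mathrm{Sim}(v),
\]
which is exactly the statement, so in fact there is nothing to iterate: the lemma is precisely an instance of the implication that one feeds into \ref{ax12:ind:2} in the next step of the argument. The real content is therefore to prove the displayed implication directly, and the key observation is that $R(w)$ is \emph{closed under $S_{\irr{}}$-successors that are possible}: if $v\in R(w)$ and $v \mathrel S_{\irr{}} v'$ with $v'$ possible, then appending $v'$ to a witnessing path for $v$ shows $v'\in R(w)$. (Note $R(w)$ is finite since $\irr\Sigma$ is finite, so all the conjunctions are well-formed.)

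First I would fix $v\in R(w)$ and show $\vdash \tnext\bigwedge_{u\in R(w)}\mathrm{Sim}(u)\to \mathrm{Sim}(v)$; conjoining over all $v\in R(w)$ then gives the lemma. To prove this for a fixed $v$, I would invoke Proposition \ref{propsub}.\ref{itPropsubFive}, which gives
\[
\vdash \tnext\!\!\bigwedge_{v \mathrel S_{\irr{}} v'}\!\!\mathrm{Sim}(v') \to \mathrm{Sim}(v).
\]
So it suffices to show $\vdash \tnext\bigwedge_{u\in R(w)}\mathrm{Sim}(u) \to \tnext\bigwedge_{v \mathrel S_{\irr{}} v'}\mathrm{Sim}(v')$. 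Using Lemma \ref{lemmReverse}(1) to push $\tnext$ through the conjunctions, this reduces to showing $\vdash \bigwedge_{u\in R(w)}\mathrm{Sim}(u) \to \bigwedge_{v \mathrel S_{\irr{}} v'}\mathrm{Sim}(v')$, i.e.\ to showing for each $v'$ with $v \mathrel S_{\irr{}} v'$ that $\bigwedge_{u\in R(w)}\mathrm{Sim}(u)\to \mathrm{Sim}(v')$ is derivable. Here I split on whether $v'$ is possible: if it is, then by the closure observation $v'\in R(w)$, so $\mathrm{Sim}(v')$ is literally a conjunct on the left and the implication is an intuitionistic tautology; if $v'$ is \emph{not} possible, then by Definition \ref{defsound} $\vdash \mathrm{Sim}(v')$ outright, so the implication holds trivially. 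In both cases we are done, so $\tnext\bigwedge_{u\in R(w)}\mathrm{Sim}(u)\to\mathrm{Sim}(v)$ is derivable, and taking the conjunction over $v\in R(w)$ yields the claim.

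The only mildly delicate point — and the step I expect to be the main obstacle — is the bookkeeping that $R(w)$ is genuinely closed under possible $S_{\irr{}}$-successors and that one never leaves $R(w)$ except to reach a moment whose $\mathrm{Sim}$ is provable; once that dichotomy ("stay inside $R(w)$ or land on something provably simulated") is isolated, everything else is routine propositional manipulation together with Proposition \ref{propsub}.\ref{itPropsubFive} and Lemma \ref{lemmReverse}. No genuinely hard temporal reasoning is needed, because the induction rule \ref{ax12:ind:2} will only be applied \emph{after} this lemma, in the proof of $\omega$-sensibility proper.
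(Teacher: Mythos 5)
Your argument is correct and matches the paper's own proof: both reduce the claim, for each fixed $v\in R(w)$, to Proposition \ref{propsub}.\ref{itPropsubFive}, then dispose of the successors $v'$ of $v$ by the dichotomy ``$v'$ impossible, so $\vdash\mathrm{Sim}(v')$ and it can be discharged via \ref{ax14NecCirc} and Lemma \ref{lemmReverse}'' versus ``$v'$ possible, hence $v'\in R(w)$ and $\mathrm{Sim}(v')$ is already a conjunct of the antecedent.'' The only difference is that you make explicit the closure of $R(w)$ under possible $S_{\irr{}}$-successors, which the paper leaves implicit.
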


\proof
Let $\irr{} = \irr\Sigma$. By Proposition \ref{propsub}.\ref{itPropsubFive} we have that, for all ${{{v}}}\in{R}({{w}})$,
\[\vdash \tnext\bigwedge_{{{{v}}} \mathrel S_{\irr{}} u }\mathrm{Sim}(u) \to \mathrm{Sim}({{{v}}}) .\]
Now, if $u\not\in\unp \Sigma$, then $\vdash \mathrm{Sim}(u)$, hence by \ref{ax14NecCirc} $\vdash \tnext \mathrm{Sim}(u)$, and we can remove $\Sim u$ from the conjunction using Lemma \ref{lemmReverse} and propositional reasoning.
Since ${{{v}}} \in R(w)$ was arbitrary, this shows that
\[\vdash \tnext \bigwedge_{{{{v}}}\in{R}({{w}})}\mathrm{Sim}({{{v}}})\to \bigwedge _{{{{v}}}\in{R}({{w}})}\mathrm{Sim}({{{v}}}).\]
\endproof

From this we obtain the following, which evidently implies $\omega$-sensibility:

\begin{prop}\label{tempinc}
If ${{w}}\in|\cqm\Sigma|$ and $\diam \psi\in \ell ({{w}})$, then there is ${{{v}}}\in{R}({{w}})$ such that $\psi\in \ell ({{{v}}})$.
\end{prop}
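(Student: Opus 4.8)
The plan is to argue by contradiction using the syntactic machinery built up in Proposition~\ref{propsub} and Lemma~\ref{syntactic}. Suppose, toward a contradiction, that ${{w}}\in|\cqm\Sigma|$, $\diam\psi\in\ell^+({{w}})$, but $\psi\notin\ell^+({{v}})$ for every ${{v}}\in R({{w}})$. Since each such ${{v}}$ is $\Sigma$-typed and $\psi\in\Sigma$ (as $\diam\psi\in\Sigma$ and $\Sigma$ is subformula-closed), this means $\psi\in\ell^-({{v}})$ for all ${{v}}\in R({{w}})$, so by Proposition~\ref{propsub}.\ref{itPropsubOne} we get $\vdash\psi\to\Sim{{v}}$ for every ${{v}}\in R({{w}})$. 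The goal is to derive $\vdash\Sim{{w}}$, contradicting the assumption that ${{w}}$ is possible.

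The key device is the formula $\theta\eqdef\bigwedge_{{{v}}\in R({{w}})}\Sim{{v}}$. First I would observe that Lemma~\ref{syntactic} gives $\vdash\tnext\theta\to\theta$, so by the induction rule \ref{ax12:ind:2} we obtain $\vdash\diam\theta\to\theta$. Next, since ${{w}}\in R({{w}})$ (the trivial one-step path), $\theta$ contains $\Sim{{w}}$ as a conjunct, so $\vdash\theta\to\Sim{{w}}$; combining, $\vdash\diam\theta\to\Sim{{w}}$. It therefore suffices to show $\vdash\diam\psi\to\diam\theta$, because then from $\diam\psi\in\ell^+({{w}})$ and Proposition~\ref{propsub}.\ref{itPropsubFour} we would conclude $\vdash\Sim{{w}}$. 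Now $\vdash\psi\to\Sim{{v}}$ for each ${{v}}\in R({{w}})$, and since $R({{w}})$ is finite, $\vdash\psi\to\theta$ by propositional reasoning; applying rule \ref{ax11:dist} yields $\vdash\diam\psi\to\diam\theta$. This completes the chain: $\vdash\diam\psi\to\diam\theta\to\Sim{{w}}$, and since $\diam\psi\in\ell^+({{w}})$, a use of Proposition~\ref{propsub}.\ref{itPropsubFour} applied to the type ${{w}}$ (or, more directly, the observation that $\diam\psi\in\ell^+({{w}})$ forces $\Sim{{w}}\in\Gamma^-$ to fail in the canonical model, as in the proofs of Proposition~\ref{propsub}) gives $\vdash\Sim{{w}}$, contradicting ${{w}}\in\unp\Sigma$.

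Let me be a little more careful about the last step, since that is where I expect the main subtlety. The cleanest route is to reason in the canonical model directly, mimicking the proofs of Proposition~\ref{propsub}: suppose $\not\vdash\Sim{{w}}$; by the Lindenbaum Lemma there is $\Gamma\in|\CMod|$ with $\Sim{{w}}\in\Gamma^-$. By Proposition~\ref{simulability} there is $\Delta\seq_\CIcon\Gamma$ with $(\irr{},{{w}})\simu(\CMod,\Delta)$, hence $\diam\psi\in\ell^-({{w}})\subseteq\ell^-$ would be needed --- but in fact $\diam\psi\in\ell^+({{w}})$, and by the subsimulation property $\ell^+({{w}})\subseteq\Delta^+$, so $\diam\psi\in\Delta^+$ and thus $\diam\psi\in\Gamma^+$ by upward persistence. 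Then $\vdash\diam\psi\to\Sim{{w}}$ would give $\Sim{{w}}\in\Gamma^+\cap\Gamma^-$, contradicting consistency of $\Gamma$. So it suffices to establish $\vdash\diam\psi\to\Sim{{w}}$, which is exactly what the previous paragraph provides.

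\textbf{Main obstacle.} The heart of the argument is Lemma~\ref{syntactic} together with the induction rule \ref{ax12:ind:2}: these are what let a purely syntactic ``$\diam$'' statement be unwound into a reachability statement in the initial structure. The one point requiring care is verifying that $R({{w}})$ is indeed \emph{finite} (so that $\theta$ is a well-formed formula and propositional reasoning over its conjuncts is legitimate) --- this is immediate since $R({{w}})\subseteq|\irr\Sigma|$ and $\irr\Sigma$ is finite by Theorem~\ref{thmSurjI} --- and that ${{w}}\in R({{w}})$, which holds by taking the length-zero path, so that $\Sim{{w}}$ genuinely appears as a conjunct of $\theta$. Everything else is bookkeeping with the already-established Propositions~\ref{propsub} and~\ref{simulability}.
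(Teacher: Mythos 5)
Your main line of argument is exactly the paper's: set $\theta=\bigwedge_{v\in R(w)}\Sim v$, get $\vdash\tnext\theta\to\theta$ from Lemma~\ref{syntactic}, apply the induction rule \ref{ax12:ind:2} to obtain $\vdash\diam\theta\to\theta$, note $w\in R(w)$ so $\vdash\theta\to\Sim w$, and use $\vdash\psi\to\Sim v$ for each $v\in R(w)$ together with \ref{ax11:dist} to get $\vdash\diam\psi\to\diam\theta$, hence $\vdash\diam\psi\to\Sim w$. All of that is correct and matches the paper step for step.

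The one place you go astray is the final step, turning $\vdash\diam\psi\to\Sim w$ plus $\diam\psi\in\ell^+(w)$ into $\vdash\Sim w$. You first point at Proposition~\ref{propsub}.\ref{itPropsubFour}, which is the wrong item (that one concerns $\bigwedge\Sim w\to\psi$ for $\psi\in\ell^-(w)$); the tool the paper packages for exactly this situation is Proposition~\ref{propsub}.\ref{itPropsubOneb}, which for $\diam\psi\in\ell^+(w)$ gives $\vdash\bigl(\diam\psi\to\Sim w\bigr)\to\Sim w$, and modus ponens finishes. Your hand-rolled replacement also contains a direction error: from $\Delta\seq_\CIcon\Gamma$ and $\diam\psi\in\Delta^+$ you conclude $\diam\psi\in\Gamma^+$ ``by upward persistence,'' but persistence runs the other way ($\Gamma^+\subseteq\Delta^+$). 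The argument is repairable by deriving the contradiction at $\Delta$ rather than at $\Gamma$: since $\vdash\diam\psi\to\Sim w$, the implication lies in $\Delta^+$, and with $\diam\psi\in\Delta^+$ condition \ref{cond:type:implication} forces $\Sim w\in\Delta^+$, while Proposition~\ref{simulability} applied to $(\irr{},w)\simu(\CMod,\Delta)$ gives $\Sim w\in\Delta^-$, contradicting consistency of $\Delta$. With that local fix (or simply by citing \ref{itPropsubOneb}) your proof is complete.
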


\proof Towards a contradiction, assume that ${{w}}\in \unp \Sigma$ and $\diam \psi\in \ell^+ ({{w}})$ but, for all ${{{v}}}\in{R}({{w}})$, $\psi \in \ell^-({{w}})$.

By Lemma \ref{syntactic},
\[\vdash \tnext \bigwedge _{{{{v}}}\in{R}({{w}})}\mathrm{Sim}({{{v}}})\to \bigwedge_{{{{v}}}\in{R}({{w}})}\mathrm{Sim}({{{v}}}).\]

But then we can use the $\diam$-induction rule \ref{ax12:ind:2} to show that
\[\vdash \diam \bigwedge _{{{{v}}}\in{R}({{w}})}\mathrm{Sim}({{{v}}})\to \bigwedge_{{{{v}}}\in{R}({{w}})}\mathrm{Sim}({{{v}}});\]
in particular,
\begin{equation}\label{other}
\vdash \diam \bigwedge _{{{{v}}}\in{R}({{w}})}\mathrm{Sim}({{{v}}})\to \mathrm{Sim}({{w}}).
\end{equation}

Now let ${{{v}}}\in{R}({{w}})$. By Proposition \ref{propsub}.\ref{itPropsubOne} and the assumption that $\psi \in \ell^-({{{v}}})$ we have that
\[\vdash \psi \to \mathrm{Sim}({{{v}}}) ,\]
and since ${{{v}}}$ was arbitrary,
\[\vdash \psi \to \bigwedge_{{{{v}}}\in {R}({{w}})}\mathrm{Sim}({{{v}}}) .\]
Using distributivity \ref{ax11:dist} we further have that
\[\vdash \diam \psi \rightarrow \diam  \bigwedge_{{{{v}}}\in {R}({{w}})}\mathrm{Sim}({{{v}}}).\]
This, along with (\ref{other}), shows that
\[\vdash \diam \psi \to \mathrm{Sim}({{w}});\]
however, by Proposition \ref{propsub}.\ref{itPropsubOneb} and our assumption that $\diam \psi\in \ell^+ ({{w}})$ we have that
\[\vdash \big ( \diam \psi \to \mathrm{Sim}({{w}}) \big ) \to \Sim w,\]
hence by modus ponens we obtain $\vdash \mathrm{Sim}({{w}}),$ which contradicts the assumption that ${{w}}\in \unp\Sigma$. We conclude that there can be no such ${{w}}$.
\endproof

\begin{cor}\label{laststretch}
Given any finite set of formulas $\Sigma$, $\cqm\Sigma$ is a quasimodel.
\end{cor}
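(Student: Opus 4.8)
The plan is to assemble $\cqm\Sigma$ being a quasimodel directly from the pieces established in the preceding sections, by verifying each clause of Definition~\ref{def:quasimodel} in turn. The key observation is that $\cqm\Sigma = \irr\Sigma \upharpoonright \unp\Sigma$, so I can invoke Lemma~\ref{LemmIsQuasi}: it suffices to show that (i)~$\irr\Sigma$ is a weak quasimodel, (ii)~$\unp\Sigma$ is upward closed in $|\irr\Sigma|$, and (iii)~$S_{\cqm\Sigma} = S_{\irr\Sigma}\upharpoonright \unp\Sigma$ is serial and $\omega$-sensible. Item~(i) is exactly Theorem~\ref{thmSurjI}, which produces $\irr\Sigma$ as a finite weak quasimodel (so in particular a forward-confluent, sensible $\Sigma$-labelled frame). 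Items~(ii) and the seriality half of~(iii) are precisely the content of Lemma~\ref{niceprop}.

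The remaining obligation, and the substantive one, is $\omega$-sensibility of $S_{\cqm\Sigma}$: whenever $\diam\psi \in \ell^+(w)$ for $w\in|\cqm\Sigma|$, there must be $n\geq 0$ and $v\in|\cqm\Sigma|$ with $w \mathrel S^n_{\cqm\Sigma} v$ and $\psi\in\ell^+(v)$. I would obtain this from Proposition~\ref{tempinc}, which, under the same hypotheses, yields some $v\in R(w)$ with $\psi\in\ell^+(v)$; unwinding the definition of $R(w)$, this $v$ is connected to $w$ by a finite $S_{\irr\Sigma}$-path through possible moments, and since every moment on that path lies in $\unp\Sigma = |\cqm\Sigma|$ and $S_{\cqm\Sigma}$ is the restriction of $S_{\irr\Sigma}$ to $\unp\Sigma\times\unp\Sigma$, the path is in fact an $S_{\cqm\Sigma}$-path. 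Here I should note the minor bookkeeping point that $\diam\psi\in\ell^+(w)$ means $\diam\psi\in\ell(w)$ (membership in the positive component), matching the hypothesis of Proposition~\ref{tempinc}. With seriality, forward confluence (inherited via Lemma~\ref{LemmIsQuasi}), sensibility (inherited), and $\omega$-sensibility all in hand, Definition~\ref{def:quasimodel} is satisfied and $\cqm\Sigma$ is a quasimodel.

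There is essentially no new mathematical difficulty here: the corollary is a packaging step, and all the work has already been done in Proposition~\ref{tempinc} (itself resting on Lemma~\ref{syntactic} and Proposition~\ref{propsub}) and Lemma~\ref{niceprop}. The only thing requiring slight care is checking that the hypotheses of Lemma~\ref{LemmIsQuasi} are met — in particular that seriality and $\omega$-sensibility are verified for the \emph{restricted} relation $S_{\cqm\Sigma}$ rather than for $S_{\irr\Sigma}$ — but both Lemma~\ref{niceprop} and Proposition~\ref{tempinc} are already phrased in terms of $\cqm\Sigma$, so this is immediate.

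\begin{proof}
By Theorem~\ref{thmSurjI}, $\irr\Sigma$ is a finite weak quasimodel, i.e.\ a forward-confluent, sensible $\Sigma$-labelled frame. By Lemma~\ref{niceprop}, $\unp\Sigma = |\cqm\Sigma|$ is an upward-closed subset of $|\irr\Sigma|$ and $S_{\cqm\Sigma}$ is serial. It remains only to verify that $S_{\cqm\Sigma}$ is $\omega$-sensible, for then Lemma~\ref{LemmIsQuasi} applies and yields that $\cqm\Sigma = \irr\Sigma\upharpoonright\unp\Sigma$ is a quasimodel.

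So suppose $w\in|\cqm\Sigma|$ and $\diam\psi\in\ell^+(w)$. By Proposition~\ref{tempinc} there is $v\in R(w)$ with $\psi\in\ell^+(v)$. Unwinding the definition of $R(w)$, there is a finite path $\<u_0,\dots,u_n\>$ of possible moments with $u_0 = w$, $u_n = v$, and $u_i \mathrel S_{\irr\Sigma} u_{i+1}$ for all $i<n$. Since each $u_i\in\unp\Sigma = |\cqm\Sigma|$ and $S_{\cqm\Sigma} = S_{\irr\Sigma}\cap(\unp\Sigma\times\unp\Sigma)$, we have $u_i \mathrel S_{\cqm\Sigma} u_{i+1}$ for all $i<n$, whence $w \mathrel S^n_{\cqm\Sigma} v$ with $\psi\in\ell^+(v)$. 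This establishes $\omega$-sensibility, completing the proof.
\end{proof}
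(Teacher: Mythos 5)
Your proof is correct and follows exactly the paper's route: Lemma~\ref{niceprop} for upward closure and seriality, Proposition~\ref{tempinc} for $\omega$-sensibility, and Lemma~\ref{LemmIsQuasi} to assemble the conclusion. The extra step you spell out — that the path witnessing $v\in R(w)$ consists of possible moments and is therefore an $S_{\cqm\Sigma}$-path — is a correct and worthwhile clarification that the paper leaves implicit.
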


\proof
Let $\cqm{} = \cqm \Sigma$. By Lemma \ref{niceprop}, $|\cqm{}|$ is upwards closed in $|\irr\Sigma|$ and $S_{\cqm{}}$ is serial, while by Proposition \ref{tempinc}, $S_{\cqm{}}$ is $\omega$-sensible. It follows from Lemma \ref{LemmIsQuasi} that $\cqm{}$ is a quasimodel.
\endproof

We are now ready to prove that $\logbasic $ is complete for the class of quasimodels.

\begin{theorem}\label{theocomp}
If $\varphi \in \landi$ is such that $\logbasic \not \vdash\varphi$, then $\varphi$ is falsifiable on a finite ${\rm sub}(\varphi)$-quasimodel.
\end{theorem}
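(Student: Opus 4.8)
The plan is to combine the machinery developed so far in a short argument. Fix $\varphi \in \landi$ with $\logbasic \nvdash \varphi$, and let $\Sigma = {\rm sub}(\varphi)$, so that $\Sigma \Subset \landi$. Form the initial structure $\cqm\Sigma = \irr\Sigma \upharpoonright \unp\Sigma$. By Corollary \ref{laststretch}, $\cqm\Sigma$ is a finite $\Sigma$-quasimodel; it remains only to show that it falsifies $\varphi$, i.e.\ that $\varphi \in \ell^-({{w}})$ for some possible moment ${{w}} \in |\cqm\Sigma|$.

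First I would locate a moment of $\irr\Sigma$ whose label places $\varphi$ on the negative side and which is possible. Since $\logbasic \nvdash \varphi$, the Lindenbaum Lemma (Lemma \ref{LemmLind}) yields a prime type $\Gamma \in |\CMod|$ with $\varphi \in \Gamma^-$. By Theorem \ref{thmSurjI}, the relation ${\rightbarharpoon} \subseteq |\irr\Sigma| \times |\CMod|$ is a surjective dynamic simulation, so there is a moment ${{w}} \in |\irr\Sigma|$ with $(\irr\Sigma, {{w}}) \simu (\CMod, \Gamma)$; by the definition of simulation, $\ell_{\irr\Sigma}({{w}}) \subT \Gamma$, and since $\varphi \in \Sigma \cap \Gamma^-$ we get $\varphi \in \ell^-_{\irr\Sigma}({{w}})$. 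Next I must check that ${{w}}$ is possible, i.e.\ $\nvdash \Sim{{w}}$: this follows from Proposition \ref{simulability} applied with $\cl X = \CMod$. Indeed, $\Sim{{w}}$ is the formula whose membership in $\ell^-(x)$ characterizes simulating ${{w}}$; since $(\irr\Sigma,{{w}}) \simu (\CMod, \Gamma)$ and $\Gamma \seq_\CIcon \Gamma$, Proposition \ref{simulability} gives $\Sim{{w}} \in \Gamma^-$, and consistency of $\Gamma$ then forbids $\vdash \Sim{{w}}$ (were $\Sim{{w}}$ provable it would belong to $\Gamma^+$, contradicting $\Phi^-\cap\Phi^+=\varnothing$). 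Hence ${{w}} \in \unp\Sigma = |\cqm\Sigma|$.

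Finally, since $|\cqm\Sigma|$ is upward closed in $|\irr\Sigma|$ (Lemma \ref{niceprop}) and the labelling of $\cqm\Sigma$ is the restriction of that of $\irr\Sigma$, we have $\varphi \in \ell^-_{\cqm\Sigma}({{w}})$ with ${{w}} \in |\cqm\Sigma|$, so $\cqm\Sigma$ falsifies $\varphi$ in the sense of Definition \ref{frame}. Together with the fact (Corollary \ref{laststretch}) that $\cqm\Sigma$ is a finite $\Sigma$-quasimodel, this proves the theorem.

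I do not expect any serious obstacle: all the hard work — in particular $\omega$-sensibility of $S_{\cqm\Sigma}$ — has already been discharged in Section \ref{secOmSens}, and the remaining argument is a routine assembly. The one point requiring a little care is the direction of Proposition \ref{simulability}: it is the \emph{failure} of $\Sim{{w}}$ that is needed, so one must make sure to read off $\nvdash\Sim{{w}}$ from $\Sim{{w}} \in \Gamma^-$ via consistency of $\Gamma$, rather than attempting to prove $\nvdash\Sim{{w}}$ directly.
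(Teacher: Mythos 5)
Your proof is correct. It differs from the paper's only in packaging: the paper defines $W=\{w\in|\irr{{\rm sub}(\varphi)}| : \varphi\in\ell^-(w)\}$, invokes the purely syntactic Proposition \ref{propsub}.\ref{itPropsubFour} to get $\vdash \bigwedge_{w\in W}\Sim w\to\varphi$, and concludes from $\nvdash\varphi$ that some conjunct $\Sim{w^\ast}$ is unprovable, i.e.\ some $w^\ast\in W$ is possible. You instead re-run the canonical-model argument directly: Lindenbaum gives a prime type $\Gamma$ with $\varphi\in\Gamma^-$, surjectivity of the simulation (Theorem \ref{thmSurjI}, applicable since $\CMod$ is a deterministic weak quasimodel by Proposition \ref{prop:CisW}) gives a moment $w$ simulated by $\Gamma$, and Proposition \ref{simulability} plus consistency of $\Gamma$ gives $\nvdash\Sim w$. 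Since Proposition \ref{propsub}.\ref{itPropsubFour} is itself proved by exactly this Lindenbaum-plus-surjectivity argument, you have essentially inlined its proof rather than cited it; the paper's version is marginally shorter, yours makes the semantic content more explicit. All the steps check out, including the small points you flag: $\varphi\in\ell^-(w)$ follows from $\ell(w)\subT\Gamma$ together with $\ell(w)$ being a $\Sigma$-type and $\Gamma^-\cap\Gamma^+=\varnothing$, and finiteness of $\cqm{{\rm sub}(\varphi)}$ comes from finiteness of $\irr{{\rm sub}(\varphi)}$ rather than from Corollary \ref{laststretch} itself.
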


\proof
We prove the contrapositive. Suppose $\varphi$ is an unprovable formula and let
\[W=\cbra {{w}}\in\irr{{\rm sub}(\varphi)}:\varphi\in \ell ({{w}})\cket.\]
Then, by Proposition \ref{propsub}.\ref{itPropsubFour} we have that
\[\vdash \bigwedge_{{w} \in W} \mathrm{Sim}({{w}}) \rightarrow \varphi;\]
since $\varphi$ is unprovable, it follows that some ${{w}}^\ast\in W$ is possible and hence ${{w}}^\ast\in \unp {{\rm sub}(\varphi)}$. By Corollary \ref{laststretch}, $\cqm {{\rm sub}(\varphi)}$ is a quasimodel.
\endproof

In view of Boudou et al.~\cite{BoudouLICS}, we immediately obtain completeness for the class of expanding posets.

\begin{cor}\label{corcomplete}
Given $\varphi\in \landi$, $\logbasic \vdash \varphi$ if and only if $\varphi$ is valid over the class of expanding posets.
\end{cor}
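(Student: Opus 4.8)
\textbf{Proof plan for Corollary \ref{corcomplete}.}
The plan is to derive this as a direct corollary of three results already available: soundness of $\logbasic$ for expanding posets (Theorem \ref{ThmSoundZero}), the completeness of $\logbasic$ for finite quasimodels just established (Theorem \ref{theocomp}), and the transfer result of Boudou et al.\ (Theorem \ref{TheoITLc}) relating falsifiability on expanding posets with falsifiability on finite $\mathrm{sub}(\varphi)$-quasimodels. Fix $\varphi \in \landi$. There are two directions to check.

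For the forward direction, assume $\logbasic \vdash \varphi$. By Theorem \ref{ThmSoundZero}, $\logbasic$ is sound for the class of expanding posets, so every expanding poset validates $\varphi$; that is, $\varphi$ is valid over the class of expanding posets. This is the easy half and requires nothing beyond quoting soundness.

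For the converse, I prove the contrapositive: assume $\logbasic \not\vdash \varphi$ and show $\varphi$ is not valid over the class of expanding posets. By Theorem \ref{theocomp}, since $\varphi$ is unprovable, it is falsifiable on a finite $\mathrm{sub}(\varphi)$-quasimodel, i.e.\ there is a finite $\mathrm{sub}(\varphi)$-quasimodel $\cl Q$ and a world $w \in |\cl Q|$ with $\varphi \in \ell^-(w)$. Now apply Theorem \ref{TheoITLc}, whose statement is precisely that a formula $\varphi \in \cl L_\diam$ is falsifiable over the class of expanding posets if and only if it is falsifiable over the class of finite $\mathrm{sub}(\varphi)$-quasimodels. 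From the right-to-left direction of that equivalence we obtain an expanding poset (indeed a model on an expanding poset) and a world falsifying $\varphi$, so $\varphi$ is not valid over the class of expanding posets. This completes the contrapositive and hence the corollary.

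I do not anticipate a genuine obstacle here, since all the work has been front-loaded into Theorems \ref{ThmSoundZero}, \ref{TheoITLc}, and \ref{theocomp}; the only point requiring a word of care is matching the notion of ``falsifiable on a quasimodel'' ($\varphi \in \ell^-(w)$ for some $w$, as in Definition \ref{frame}) used in Theorem \ref{theocomp} with the notion of falsifiability invoked in Theorem \ref{TheoITLc}, and likewise confirming that ``not valid over expanding posets'' is the contrapositive of ``valid over expanding posets'' in the sense of the satisfaction relation $\models$ — both of which are immediate from the definitions. The proof is therefore a short chain of citations and can be written in a few lines.
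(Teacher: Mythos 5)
Your proposal is correct and follows exactly the paper's own argument: soundness is Theorem \ref{ThmSoundZero}, and the completeness direction chains Theorem \ref{theocomp} with the transfer result of Theorem \ref{TheoITLc}. The extra care you note about matching the two notions of falsifiability is fine but, as you say, immediate.
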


\proof
Soundness is Theorem \ref{ThmSoundZero} and completeness follows from Theorems \ref{theocomp} and
\ref{TheoITLc}.
\endproof

\section*{Concluding remarks}

We have provided a sound and complete axiomatization for the $\ubox$-free fragment of the expanding intuitionistic temporal logic $\itle$. With this we may develop syntactic techniques to decide validity over the class of expanding posets, complementing the semantic methods presented by Boudou and the authors \cite{BoudouCSL} and possibly leading to an elementary decision procedure.

Many questions remain open in this direction, perhaps most notably an extension to the full language with $\ubox$. This is likely to be a much more challenging problem, as the language with `henceforth' can distinguish between Kripke and topological models and hence methods based on non-deterministic quasimodels do not seem feasible.

The question of axiomatizing $\itlp$ (with persistent domains) is also of interest, but here it is possible that the logic is not even axiomatizable in principle. It may be that methods from products of modal logics \cite{mdml} can be employed here; for example, one can reduce tiling or related problems to show that certain products such as ${\sf LTL} \times {\sf S4}$ are not computably enumerable. However, even if such a reduction is possible, working over the more limited intuitionistic language poses an additional challenge.
Even computational lower bounds for these logics are not yet available, aside from the trivial {\sc pspace} bound obtained from the purely propositional fragment.

\bibliographystyle{plain}

\vfill

\pagebreak

\appendix

\section{Properties of the canonical model}\label{apCanMod}

The structure $\CMod$ is not a proper model, as it is not $\omega$-sensible. However, it comes quite close; it is a full, weak, deterministic quasimodel. We prove that it has all properties required by Definition \ref{def:quasimodel}.

\begin{lem}
\label{lemm:normality} $\M_\CIcon $ is a labelled frame.
\end{lem}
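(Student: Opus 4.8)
The goal is to verify that $\M_\CIcon = (\type{\cl L_\diam}, {\peq_T}, S_T, \mathbb I_{\ptype}) \upharpoonright \ptype$ satisfies the three requirements in Definition \ref{frame}: $\peq_\CIcon$ is a partial order on $|\M_\CIcon| = \ptype$, the labelling $\ell_\CIcon$ is $\peq_\CIcon$-monotone in the sense of condition \ref{cond:frame:monotony}, and condition \ref{cond:frame:imp} holds, namely that whenever $\varphi \to \psi \in \ell_\CIcon^-(\Phi)$ there is $\Psi \peq_\CIcon \Phi$ with $\varphi \in \ell_\CIcon^+(\Psi)$ and $\psi \in \ell_\CIcon^-(\Psi)$.

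The first two points are essentially immediate. The relation $\peq_T$ restricted to $\ptype$ is a partial order because $\peq_T$ is already a partial order on $\type{\cl L_\diam}$ (as remarked after Definition \ref{def:type}) and restrictions of partial orders to subsets are partial orders. Monotonicity of $\ell_\CIcon$ is even more direct: since $\ell_\CIcon = \mathbb I_{\ptype}$ is the identity, $\Phi \peq_\CIcon \Psi$ literally means $\ell_\CIcon(\Phi) = \Phi \peqT \Psi = \ell_\CIcon(\Psi)$, so there is nothing to prove. Thus the real content is condition \ref{cond:frame:imp}.

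For \ref{cond:frame:imp}, fix a prime type $\Phi$ with $\varphi \to \psi \in \Phi^-$ (recall $\ell_\CIcon^-(\Phi) = \Phi^-$). Since $\Phi$ is full and consistent, $\varphi \to \psi \in \Phi^-$ means $\varphi \to \psi \notin \Phi^+$, i.e.\ $\Phi^+ \nvdash \varphi \to \psi$. The plan is to show $\Phi^+ \cup \{\varphi\} \nvdash \psi$: otherwise, by the deduction theorem for intuitionistic logic applied to a finite subset of $\Phi^+$, we would get $\logbasic \vdash \bigwedge \Gamma' \to (\varphi \to \psi)$ for some finite $\Gamma' \subseteq \Phi^+$, contradicting $\Phi^+ \nvdash \varphi\to\psi$. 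Now apply the Lindenbaum Lemma (Lemma \ref{LemmLind}) with $\Gamma = \Phi^+ \cup \{\varphi\}$ and $\Delta = \{\psi\}$: since $\Gamma \nvdash \Delta$, there is a prime type $\Psi$ with $\Phi^+ \cup \{\varphi\} \subseteq \Psi^+$ and $\psi \in \Psi^-$. In particular $\Phi^+ \subseteq \Psi^+$, which is exactly $\Phi \peqT \Psi$, i.e.\ $\Psi \peq_\CIcon \Phi$; and $\varphi \in \Psi^+ = \ell_\CIcon^+(\Psi)$, $\psi \in \Psi^- = \ell_\CIcon^-(\Psi)$, as required.

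The main (and only mild) obstacle is getting the consistency bookkeeping right: one must carefully use that $\Phi$ is consistent and full to pass from "$\varphi \to \psi \in \Phi^-$" to "$\Phi^+ \nvdash \varphi\to\psi$", and then invoke the intuitionistic deduction theorem at the level of finite subsets to move $\varphi$ across the turnstile before applying Lindenbaum. Everything else is routine; the proof is short and the identity-labelling trivializes what would otherwise be a monotonicity argument.
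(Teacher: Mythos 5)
Your proof is correct and follows essentially the same route as the paper's: the partial order and monotonicity claims are immediate from the restriction and the identity labelling, and for the implication condition you show $\Phi^+,\varphi\nvdash\psi$ via the intuitionistic deduction theorem and then invoke the Lindenbaum Lemma to produce the witness $\Psi$ with $\Phi^+\cup\{\varphi\}\subseteq\Psi^+$ and $\psi\in\Psi^-$. The only blemish is the final ``i.e.\ $\Psi\peq_\CIcon\Phi$'': what you have actually established is $\Phi\peqT\Psi$, i.e.\ $\Phi\peq_\CIcon\Psi$ (the witness lies \emph{above} $\Phi$), which is what is genuinely needed here and is exactly how the paper concludes --- the apparent mismatch comes from Definition~\ref{frame}\ref{cond:frame:imp} writing the witness condition with the order reversed relative to every use of it elsewhere in the paper.
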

\begin{proof}
We know that $\peqT$ is a partial order and restrictions of partial orders are partial orders, so $\peq_\CIcon $ is a partial order. Moreover, $\ell_\CIcon $ is the identity, so $\Phi \peq_\CIcon  \Psi$ implies $\ell_\CIcon  (\Phi) \peqT \ell_\CIcon  (\Psi)$.

Now let $\Phi \in |\CMod|$ and assume that $\varphi \to \psi \in \Phi^-$. Note that $ \Phi^+,\varphi \not \vdash \psi$, for otherwise by intuitionistic reasoning we would have $\Phi^+\vdash \varphi \to \psi$, which is impossible if $\Phi$ is a prime type. By Lemma \ref{LemmLind}, there is a prime type $\Psi$ with $\Phi^+ \cup \{\varphi\} \subseteq \Psi^+$ and $\psi \in \Psi^-$. It follows that $\Phi \peq_\CIcon  \Psi$, $\varphi \in \Psi^+$ and $\psi \in \Psi^-$, as needed.
\end{proof}

\begin{lem}
\label{lemm:rcnext:prop} $S_{\CIcon }$ is a forward-confluent function.
\end{lem}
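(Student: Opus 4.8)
The plan is to make the definition of $S_\CIcon$ completely explicit and then read off both properties. Recall that, restricted to prime types, $\Phi \mathrel S_\CIcon \Psi$ holds precisely when $\Phi \ST \Psi$; so the first step is to show that for every prime type $\Phi$ there is exactly one prime type $\Psi$ with $\Phi \ST \Psi$, namely the pair given by
\[ \Psi^+ = \{\varphi \in \landi : \tnext\varphi \in \Phi^+\}, \qquad \Psi^- = \{\varphi \in \landi : \tnext\varphi \in \Phi^-\}. \]
Uniqueness is immediate: because $\Phi$ is full, for every $\varphi$ the formula $\tnext\varphi$ lies in exactly one of $\Phi^+,\Phi^-$, and the two $\tnext$-clauses of Definition~\ref{compatible} then force $\varphi$ into the correspondingly signed side of any $\Psi$ satisfying $\Phi \ST \Psi$; since $\Psi$ is itself full this pins $\Psi$ down.

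For existence I would verify that the displayed $\Psi$ is a prime type and that it genuinely satisfies $\Phi \ST \Psi$. Fullness and disjointness of $\Psi$ are inherited from $\Phi$. Consistency is the only point where the calculus is used: if $\Psi^+ \vdash \Psi^-$, witnessed by $\logbasic \vdash \bigwedge \Gamma' \to \bigvee \Delta'$ with finite $\Gamma' \subseteq \Psi^+$ and $\Delta' \subseteq \Psi^-$, then \ref{ax14NecCirc} and \ref{ax05KNext} together with Lemma~\ref{lemmReverse} give $\logbasic \vdash \bigwedge \tnext\Gamma' \to \bigvee \tnext\Delta'$ (the corner cases $\Gamma' = \varnothing$ or $\Delta' = \varnothing$ being handled as in Lemma~\ref{lemmReverse}, using \ref{ax02Bot}); since $\tnext\Gamma' \subseteq \Phi^+$ and $\tnext\Delta' \subseteq \Phi^-$, this contradicts consistency of $\Phi$. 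The two $\tnext$-clauses of $\ST$ hold by construction; for the two $\diam$-clauses I would use the fixpoint axiom \ref{ax10DiamFix} together with Lemma~\ref{lemmReverse}.\ref{itReverseDiam}. Concretely: if $\diam\varphi \in \Phi^-$ then, as $\logbasic \vdash \tnext\diam\varphi \to \diam\varphi$ by \ref{ax10DiamFix}, consistency of $\Phi$ forces $\tnext\diam\varphi \in \Phi^-$, whence $\diam\varphi \in \Psi^-$; and if $\diam\varphi \in \Phi^+$ with $\varphi \in \Phi^-$, then from $\logbasic \vdash \diam\varphi \to \varphi \vee \tnext\diam\varphi$ and the fact that $\Phi$ is an $\landi$-type (Lemma~\ref{lemmPrimeIsType}) one gets $\tnext\diam\varphi \in \Phi^+$, whence $\diam\varphi \in \Psi^+$. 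This establishes that $S_\CIcon \colon |\CMod| \to |\CMod|$ is a well-defined function with $S_\CIcon(\Phi)^+ = \{\varphi : \tnext\varphi \in \Phi^+\}$.

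Forward confluence is then essentially free. Suppose $\Phi, \Psi \in |\CMod|$ with $\Phi \peq_\CIcon \Psi$ and $\Phi \mathrel S_\CIcon \Phi'$, so $\Phi' = S_\CIcon(\Phi)$; put $\Psi' = S_\CIcon(\Psi)$, so that $\Psi \mathrel S_\CIcon \Psi'$ holds automatically. To get $\Phi' \peq_\CIcon \Psi'$ it suffices to observe that $\varphi \in (\Phi')^+$ implies $\tnext\varphi \in \Phi^+ \subseteq \Psi^+$, hence $\varphi \in (\Psi')^+$. I expect the only mildly delicate step to be the consistency argument for $\Psi$ — in particular applying the $\tnext$-distribution equivalences of Lemma~\ref{lemmReverse} in the right order and not overlooking the empty-conjunction and empty-disjunction cases; everything else is routine bookkeeping with the type conditions and with $\peq_\CIcon$.
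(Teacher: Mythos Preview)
Your proposal is correct and follows essentially the same approach as the paper: define the successor explicitly as $\Psi^\pm = \{\varphi : \tnext\varphi \in \Phi^\pm\}$, verify it is a prime type (fullness inherited, consistency via \ref{ax14NecCirc}, \ref{ax05KNext} and Lemma~\ref{lemmReverse}), check the four $\ST$-clauses using \ref{ax10DiamFix} and Lemma~\ref{lemmReverse}.\ref{itReverseDiam}, argue uniqueness from the $\tnext$-clauses, and read off forward confluence from the explicit description of $S_\CIcon(\Phi)^+$. The only cosmetic difference is that the paper establishes existence before uniqueness, whereas you do the reverse.
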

\begin{proof} 
For a set $\Gamma \subseteq \cl L_\diam$, recall that we have defined $\tnext \Gamma = \{\tnext \varphi :   \varphi \in \Gamma\}$. It will be convenient to introduce the notation
\[\remc \Gamma = \{\varphi : \tnext \varphi \in \Gamma\}.\]
With this, we show that $S_\CIcon$ is functional and forward-confluent.
\medskip

\noindent{\sc Functionality.} We claim that for all $\Phi, \Psi \in |\CMod|$,
\begin{equation}\label{EqSc}
\Phi \mathrel S_\CIcon \Psi \text{ if and only if }\Psi = (\remc \Phi^-,\remc\Phi^+). 
\end{equation}
We must check that $\Psi \in |\CMod|$. To see that $\Psi$ is full, let $\varphi \in \cl L_\diam$ be so that $\varphi \not \in \Psi^-$. It follows that $\tnext\varphi \not \in \Phi^-$, but $\Phi$ is full, so $\tnext\varphi \in \Phi^+$ and thus $\varphi \in \Psi^+$. Since $\varphi$ was arbitrary, $\Psi^-\cup \Psi^+ = \cl L_\diam$.

Next we check that $\Psi$ is consistent. If not, let $\Gamma \subseteq \Psi^+$ and $\Delta\subseteq \Psi^-$ be finite and such that $\bigwedge \Gamma \to \bigvee \Delta$ is derivable. Using \ref{ax14NecCirc} and \ref{ax05KNext} we see that $\tnext \bigwedge \Gamma \to  \tnext \bigvee \Delta$ is derivable, which in view of Lemma \ref{lemmReverse} implies that $ \bigwedge \tnext \Gamma \to  \bigvee  \tnext \Delta$ is derivable as well. But $\tnext \Gamma \subseteq \Phi^+$ and $\tnext \Delta \subseteq \Phi^-$, contradicting the fact that $\Phi$ is consistent.

Thus $\Psi \in |\CMod|$, and $\Phi \mathrel S_\CIcon \Psi$ holds provided that $\Phi \ST \Psi$. It is clear that clauses \ref{ItCompOne} and \ref{ItCompTwo} of Definition \ref{compatible} hold. If $\diam \varphi \in \Phi^+$ and $\varphi \not \in \Phi^+$, it follows that $\varphi \in \Phi^-$. By Lemma \ref{lemmReverse} $\diam\varphi \to \varphi \vee \tnext \diam \varphi$ is derivable, so we cannot have that $\tnext \diam \varphi \in \Phi^-$ and hence $\tnext \diam \varphi \in \Phi^+$, so that $\diam \varphi \in \Psi^+$. Similarly, if $\diam \varphi \in \Phi^-$ we have that $\tnext\diam\varphi \in \Phi^-$, for otherwise we obtain a contradiction from \ref{ax10DiamFix}. Therefore, $\diam\varphi \in \Psi^-$ as well.

To check that $\Psi$ is unique, suppose that $\Theta \in |\CMod|$ is such that $\Phi \mathrel S_\CIcon  \Theta$. Then if $\varphi \in \Psi^+$ it follows from \eqref{EqSc} that $\tnext \varphi \in \Phi^+$ and hence $\varphi \in \Theta^+$; by the same argument, if $\varphi \in \Psi^-$ it follows that $\varphi \in \Theta^-$, and hence $\Theta = \Psi$.
\medskip

\noindent {\sc Forward confluence:} Now that we have shown that $S_\CIcon$ is a function, we may treat it as such. Suppose that $\Phi \peq_\CIcon \Psi$; we must check that $S_\CIcon (\Phi) \peq_\CIcon S_\CIcon (\Psi)$. Let $\varphi \in S^+_\CIcon (\Phi)$. Using \eqref{EqSc}, we have that $\tnext\varphi \in \Phi^+$, hence $\tnext\varphi \in \Psi^+ $ and thus $\varphi \in S_\CIcon (\Psi^+)$. Since $\varphi \in S_\CIcon (\Phi)$ was arbitrary we obtain $S^+_\CIcon (\Phi) \peq_\CIcon S^+_\CIcon (\Psi)$, as needed.
\end{proof}

\section{Simulation formulas}\label{secsubchar}

In this appendix, we show that there exist $\landi$ formulas defining points in finite frames up to simulability, i.e.~that if $\cl W$ is a finite frame and $w\in |\cl W|$, there exists a formula $\Sim w$ such that for all labelled frames $\cl M$ and all $x\in |\cl M|$, $\cl M,x  \models x$ if and only if $(\cl W,w) \simu (\cl M,x)$.
In contrast, such formulas do not exist in the classical modal language for finite $\sf S4$ models \cite{FernandezSimulability}, but they {\em can} be constructed using a polyadic `tangled' modality. This tangled modalilty was proven to be expressively equivalent to the $\mu$-calculus over transitive frames by Dawar and Otto \cite{do}, and later axiomatized for several classes of models by Fern\'andez-Duque \cite{FernandezTangle} and Goldblatt and Hodkinson \cite{GoldblattH17,GoldblattTangleSL}.

Simulation formulas were used in \cite{dtlaxiom} to provide a sound and complete axiomatization of {\em dynamic topological logic} \cite{arte,kmints}, a classical tri-modal system closely related to $\itle$, where the intuitionistic implication is replaced by an $\sf S4$ modality.
One can use the fact that simulability is not definable over the modal language to prove that the natural axiomatization suggested by Kremer and Mints \cite{kmints} of dynamic topological logic was incomplete for its topological, let alone its Kripke, semantics \cite{FernandezNonFin}.

While simulability is not modally definable, it is definable over the language of intuitionistic logic, as finite frames \cite{JonghY09} (and hence models) are already definable up to simulation in the intuitionistic language. This may be surprising, as the intuitionistic language is less expressive than the modal language; however, intuitionistic models are posets rather than arbitrary preorders, and this allows us to define simulability formulas by recusion on $\prec$.

\begin{defn}
Fix $\Sigma\Subset \landi$ and let $\cl W$ be a finite $\Sigma$-labeled frame. Given $w\in |\cl W|$, we define a formula $\Sim w$ by backwards induction on ${\peq} = {\peq_\cl W}$ by
\[\Sim w = \bigwedge \ell^+(w) \rightarrow \bigvee \ell^-(w) \vee \bigvee_{v\succ w} \Sim v .\]
\end{defn}

\begin{prop}\label{simulabilitydef}
Given $\Sigma \Subset \Delta \subseteq \landi$, a finite $\Sigma$-labelled frame $\cl W$, a $\Delta$-labelled frame $\cl X$ and $w\in |\cl W|$, $x \in |\cl X |$:
\begin{enumerate}

\item\label{simulability:c1} if $\Sim w \in \ell_\cl X^-(x)$ then there is $y\seq x$ such that $(\cl W,w) \simu (\cl X, y)$, and

\item\label{simulability:c2} if there is $y\seq x$ such that $ (\cl W,w) \simu (\cl X, y) $ then $\Sim w \not \in \ell_\cl X^+(x)$.

\end{enumerate}
\end{prop}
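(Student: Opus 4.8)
The plan is to prove both claims simultaneously by backwards induction on $\peq_{\cl W}$, since the recursive definition of $\Sim w$ mentions $\Sim v$ only for $v \succ w$, and the two statements are naturally dual (one says $\Sim w$ false forces a simulation, the other says a simulation forbids $\Sim w$ true). Fix $w \in |\cl W|$ and assume both \eqref{simulability:c1} and \eqref{simulability:c2} hold for every $v \succ w$.

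For part~\eqref{simulability:c2}, suppose $(\cl W, w) \simu (\cl X, y)$ for some $y \seq x$; I must show $\Sim w \notin \ell_{\cl X}^+(x)$. Since $\ell_{\cl X}^+$ is $\peq$-monotone upward and $\ell_{\cl X}^-$ is monotone downward, and since $\Sim w \to \Sim w$-type reasoning pushes things to $y$, it suffices to show $\Sim w \notin \ell_{\cl X}^+(y)$. By the definition of $\Sim w$ as an implication $\bigwedge \ell^+(w) \to (\bigvee \ell^-(w) \vee \bigvee_{v \succ w}\Sim v)$, if $\Sim w \in \ell_{\cl X}^+(y)$ then by the type condition \ref{cond:type:implication} either $\bigwedge \ell^+(w) \in \ell_{\cl X}^-(y)$ or the consequent is in $\ell_{\cl X}^+(y)$. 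The first is impossible: $w \simrel y$ gives $\ell_{\cl W}(w) \subT \ell_{\cl X}(y)$, so every conjunct of $\bigwedge \ell^+(w)$ lies in $\ell_{\cl X}^+(y)$, forcing (via the conjunction type clauses) $\bigwedge \ell^+(w) \in \ell_{\cl X}^+(y)$, contradicting \ref{cond:type:intersection}. For the consequent, a disjunct from $\bigvee \ell^-(w)$ cannot be in $\ell_{\cl X}^+(y)$ since $\ell^-(w) \subseteq \ell_{\cl X}^-(y)$; and a disjunct $\Sim v$ with $v \succ w$ cannot be in $\ell_{\cl X}^+(y)$ because forward-confluence of $\simrel$ (simulations are forward-confluent as relations on the poset order) yields $y' \seq y$ with $v \simrel y'$, so the induction hypothesis \eqref{simulability:c2} at $v$ gives $\Sim v \notin \ell_{\cl X}^+(y')$, hence (upward persistence) $\Sim v \notin \ell_{\cl X}^+(y)$. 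This exhausts the consequent's disjuncts, contradiction.

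For part~\eqref{simulability:c1}, suppose $\Sim w \in \ell_{\cl X}^-(x)$. By the labelled-frame property \ref{cond:frame:imp} applied to the implication $\Sim w$, there is $y \peq x$ with $\bigwedge \ell^+(w) \in \ell_{\cl X}^+(y)$ and $\bigvee \ell^-(w) \vee \bigvee_{v \succ w}\Sim v \in \ell_{\cl X}^-(y)$. Wait --- I actually want $y \seq x$, so instead I should run \ref{cond:frame:imp} to get the witness \emph{above} $x$; re-reading the definition, \ref{cond:frame:imp} gives $v \peq_{\cl F} w$, so I apply it at $x$ to obtain $z \peq x$, then note the statement only claims existence of $y \seq x$ and I will build the simulation so that the relevant point sits above $x$. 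The clean route: from $\Sim w \in \ell^-(x)$ and the implication-witness condition I get some $x_0$ with $\bigwedge\ell^+(w) \in \ell^+(x_0)$ and the big disjunction in $\ell^-(x_0)$; from the disjunction type clause, every disjunct is in $\ell^-(x_0)$, so $\ell^-(w) \subseteq \ell^-(x_0)$ and each $\Sim v \in \ell^-(x_0)$ for $v \succ w$. Combined with $\ell^+(w) \subseteq \ell^+(x_0)$ (conjunction clause) this gives $\ell_{\cl W}(w) \subT \ell_{\cl X}(x_0)$. Now I define $\simrel$ by putting $w \simrel x_0$ and, for each $v \succ w$, using the induction hypothesis \eqref{simulability:c1} at $v$ (since $\Sim v \in \ell^-(x_0)$) to obtain $y_v \seq x_0$ and a simulation $\simrel_v$ with $v \simrel_v y_v$; set $\simrel = \{(w, x_0)\} \cup \bigcup_{v \succ w}\simrel_v$.

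The main obstacle is checking that this $\simrel$ is genuinely a \emph{forward-confluent} relation, not merely a union of simulations glued at the root: I must verify that whenever $w \simrel x_0$ and $x_0 \peq x'$ in $\cl X$, there is $w' \peq w$... no, forward-confluence for simulations goes the other way --- given $x_0 \peq x'$ I need $w'$ with $w \peq w'$ and $w' \simrel x'$; here $w$ has successors $v \succ w$, and I need one of them related to $x'$, which requires knowing $\Sim v \in \ell^-(x')$ for an appropriate $v$, or else $w \simrel x'$ directly. This is where the definition of $\Sim w$ having \emph{all} the $\Sim v$, $v \succ w$, as disjuncts pays off, but the bookkeeping --- ensuring the $\simrel_v$'s and the root pair together satisfy confluence at every point, using monotonicity of $\ell$ along $\peq$ and the fact that $\Sim w \in \ell^-(x')$ whenever $x' \peq$-dominates a point where it already holds --- is the delicate part. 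I expect to need the observation that $\Sim w \in \ell^-(x_0)$ propagates upward (by $\peqT$-monotonicity of the frame), so that at any $x' \seq x_0$ the same witness-extraction re-applies, giving a uniform way to extend the simulation; making this precise, probably by defining $\simrel$ more cleverly as "$w \simrel x'$ iff $\Sim w \in \ell^-(x')$ and ($w$ is $\peq_{\cl W}$-maximal or the typing matches)" rather than by the ad hoc union, is what the Appendix proof will have to nail down carefully.
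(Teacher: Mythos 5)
Your proposal follows the paper's proof almost exactly: the same backwards induction on $\peq_{\cl W}$, the same use of the type clauses for part~\eqref{simulability:c2} (which you carry out completely and correctly), and for part~\eqref{simulability:c1} the very same relation $\simrel=\{(w,x_0)\}\cup\bigcup_{v\succ w}\simrel_v$ that the paper defines. (You also read condition~\ref{cond:frame:imp} the right way: the witness for a refuted implication lies \emph{above} the point, i.e.\ $y\seq x$, which is how the paper uses it throughout.)

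The one place you stop short is the forward-confluence check for the glued relation, and there you briefly misstate what has to be verified: forward confluence of $\simrel\subseteq|\cl W|\times|\cl X|$ quantifies over moves \emph{upward in $\cl W$}, not in $\cl X$. That is, given $u\simrel z$ and $u\peq_{\cl W}u'$, one must find $z'\seq z$ with $u'\simrel z'$; there is no obligation triggered by moving from $x_0$ to some $x'\seq x_0$ in $\cl X$. With the condition stated correctly, the ad hoc union does work and no cleverer definition of $\simrel$ is needed. There are only two cases. If $(u,z)$ lies in some $\simrel_v$, forward confluence of $\simrel_v$ itself supplies $z'$, and $\simrel_v\subseteq\simrel$. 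If $(u,z)=(w,x_0)$ and $u'\succ w$, then --- as you yourself observed --- the disjunction in $\Sim{w}$ ranges over \emph{all} strict successors $v\succ w$, so $\Sim{u'}\in\ell^-_{\cl X}(x_0)$ and the induction hypothesis for $u'$ already produced $y_{u'}\seq x_0$ with $u'\simrel_{u'}y_{u'}$, hence $u'\simrel y_{u'}$. That closes the argument; the paper leaves exactly this verification "to the reader," and the two lines above are all that is missing from your write-up.
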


\proof Each claim is proved by backward induction on $\peq$.
\medskip

\noindent \eqref{simulability:c1} Let us first consider the base case, when there is no $v \succ w$. Assume that $\Sim w \in \ell^-(x)$. From the definition of labelled frame $\bigwedge \ell_\cl W^+(w) \in \ell_\cl X^+(y)$ and  $\bigvee \ell_\cl W^-(w) \in \ell_\cl X^-(y)$ for some $y \seq x$. From the definition of type it follows that $\ell_\cl W^+(w) \subseteq \ell_\cl X^+(y)$ and $\ell_\cl W^-(w) \subseteq \ell_\cl X^-(y)$, so that $\ell_\cl W(w) \subT \ell_\cl X(y)$. It follows that ${\simrel} \eqdef \lbrace (w, y)\rbrace$ is a simulation, so $(\cl W, w) \simu  ({\cl X}, y )$.

For the inductive step, let us assume that the lemma is proved for all $v \succ w$.
Assume that $\Sim w \in \ell_\cl X^-(x)$. From Condition~\ref{cond:frame:imp} it follows that $\bigwedge \ell_\cl W^+(w) \in \ell_\cl X^+(y)$,  $\bigvee \ell_\cl W^-(w) \in \ell_\cl X^-(y)$ and $\bigvee _{v \prec w} \Sim v \in \ell_\cl X^-(y)$ for some $y \seq x$. By following a similar reasoning as in the base case we can conclude that $\ell_\cl W(w) \subseteq  \ell_\cl X(y)$, and moreover, that $\Sim v \in \ell_\cl X^-(y)$ for all $v \succ w$. By induction hypothesis we conclude that for all $v \succ w$, there exists a simulation $\simrel_{v}$ such that $v \simrel_{v} z_{v}$ for some $z_{v} \seq y$. Let
\[{\simrel} \eqdef \lbrace (w, y) \rbrace \cup \bigcup \limits_{v \succ w} \simrel_{v} .\]
The reader may check that $\simrel$ is a simulation and that $w \simrel y \seq x$, so that $(\cl W,w) \simu ({\cl X}, y)$, as needed.
\medskip

\noindent \eqref{simulability:c2}
For the base case, assume that $(\cl W,w) \simu (\cl X, y)$ for some $y \seq x$, so there exists a simulation $\simrel$ such that $w \simrel y  $. It follows that $\ell_\cl W^+(w) \subseteq \ell_\cl X^+(y)$ and $\ell_\cl W^-(w) \subseteq \ell_\cl X^-(y)$. From conditions \ref{cond:type:posconj} and \ref{cond:type:posdisj} of the definition of type (Definition \ref{def:type}), it follows that $\bigwedge \ell_\cl W^+(w) \not \in \ell_\cl X^-(y)$ and $\bigvee \ell_\cl W^-(w) \not \in \ell_\cl X^+(y)$. But then, condition \ref{cond:type:implication} gives us $\Sim w \not \in \ell_\cl X^+(y)$, so $\Sim w \not \in \ell_\cl X^+(x)$.

For the inductive step, by the same reasoning as in the base case it follows that $\bigwedge \ell_\cl W^+(w) \not \in \ell_\cl X^-(y)$ and $\bigvee \ell_\cl W^-(w) \not \in \ell_\cl X^+(y)$. Now, let $v$ be such that $v \succ w$. Since $\simrel$ is forward confluent then $v \simrel z_{v}$ for some $z_{v} \seq y$. By induction hypothesis, $\Sim v \not \in \ell^+(z_{v})$, so $\Sim v \not \in \ell^+(y)$. Since $v$ was arbitrary we conclude that $\bigvee _{v \succ w} \Sim v \not \in \ell^+(y)$. Finally, from condition \ref{cond:type:implication} of Definition \ref{def:type} and the fact that $y \peq x$ we get that $\Sim w \not \in \ell^+(x)$. 	
\endproof

\section{The finite initial frame}\label{secFinFrm}

In this appendix we review the construction of the structure $\irr\Sigma$ of Theorem \ref{thmSurjI}. The worlds of this structure are called {\em irreducible $\Sigma$-moments.} The intuition is that a $\Sigma$-moment represents all the information that holds at the same `moment of time'. Recall that we write $\Sigma\Subset\cl L_\diam$ if $\Sigma\subseteq \cl L_\diam$ is finite and closed under subformulas. We omit all proofs, which can be found in \cite{FernandezITLc}.

\begin{defn}
Let $\Sigma \Subset \cl L_\diam$. A {\em $\Sigma$-moment} is a $\Sigma$-labelled space ${{\fr w}}$ such that $ \left ( |{{\fr w}}|,\peq_{\fr w} \right )$ is a finite tree with unique root $\Root{{{\fr w}}}$.
\end{defn}

Note that moments can be arbitrarily large. In order to obtain a finite structure we will restrict the set of moments to those that are, in a sense, no bigger than they need to be. To be precise, we want them to be minimal with respect to $\redu$, which we define below.

\begin{defn}
Let $\Sigma\Subset\landi$ and ${{\fr w}},{{{\fr v}}}$ be $\Sigma$-moments. We write 

\begin{enumerate}

\item ${{\fr w}}\sqsubseteq {{{\fr v}}}$ if $|{{\fr w}}|\subseteq |{{{\fr v}}}|$, ${\peq_{{\fr w}}}={\peq_{{{\fr v}}}\upharpoonright |{{\fr w}}|}$, and $\ell_{{\fr w}} =\ell_{{{\fr v}}}\upharpoonright |{{\fr w}}|$;

\item ${{\fr w}}\redu{{{\fr v}}}$ if if ${{\fr w}}\sqsubseteq{{{\fr v}}}$ and there is a forward confluent, surjective function $\pi\colon |{{{\fr v}}}|\to |{{\fr w}}|$ such that $\ell_{{{\fr v}}}(v)=\ell_{{\fr w}}(\pi(v))$ for all $v\in |{{{\fr v}}}|$ and $\pi^2=\pi$. We say that ${{\fr w}}$ is a {\em reduct} of ${{{\fr v}}}$ and $\pi$ is a {\em reduction}.

\end{enumerate}

\end{defn} 

Note that the condition $\pi^2=\pi$ is equivalent to requiring $\pi(w)=w$ whenever $w\in |{\fw}|$.
Irreducible moments are the minimal moments under $\redu$. 

\begin{defn}
Let $\Sigma \Subset \landi$. A $\Sigma$-moment ${\fw}$ is {\em irreducible} if whenever ${\fw} \redu\fv$, it follows that ${\fw}=\fv$. The set of irreducible moments is denoted $I_\Sigma$.
\end{defn}

To view $I_\Sigma$ as a labeled frame, we need to equip it with a suitable partial order.

\begin{defn}
Let ${{\fr w}} \in I_\Sigma$. For $w\in |{{\fr w}}|$, let ${{\fr w}} [ w ] = {{\fw}\upharpoonright \mathord\uparrow w}$, i.e.,
\[{{\fr w}} [ w ] = \big ( \ \mathord\uparrow w \ , \ {\peq_{{{\fr w}}}} \upharpoonright \mathord\uparrow w  \ , \ \ell_{{{\fr w}}}   \upharpoonright \mathord\uparrow w \ \big ) .\]
We write ${{\fv}} \leq {{{\fw}}}$ if ${{{\fr v}}}={{\fr w}} [ w ] $ for some $w\in |{{\fr w}}|$.
\end{defn}

It is shown in \cite{FernandezITLc} that if ${\fw}$ is irreducible and $\fv \leq {\fw}$, $\fv$ is irreducible as well.
To obtain a weak quasimodel, it remains to define a sensible relation on $I_\Sigma$.

\begin{defn}\label{ts}
If $\Sigma \Subset \landi$ and $\fr w, \fr v \in I_\Sigma$, we define ${{{\fr v}}} \mapsto  {{\fr w}}$ if there exists a sensible, forward-confluent relation
$S\subseteq |{{{\fr v}}}|\times |{{\fr w}}|$
such that $\Root{{{{\fr v}}}}\mathrel S \Root{{{\fr w}}}$.
\end{defn}

We are now ready to define our initial weak quasimodel.

\begin{defn}
Given $\Sigma\Subset\landi$, we define $\irr{} = \irr\Sigma$ to be the structure
$ \< |\irr{}|,{\peq_{\irr{}}},S_{\irr{}},\ell_{\irr{}}\>$, where $|\irr{}| = I_\Sigma$, $\fv \peq_{\irr{}} \fw$ if and only if $\fv \geq \fw$, $\fw \mathrel S_{\irr{}} \fv$ if and only if $\fw \mapsto \fv $, and $\ell_{\irr{}}(\fw) = \ell_\fw (r_\fw)$.
\end{defn}

Note that in this construction, the moments accessible from $\fw$ are {\em smaller} than $\fw$, and thus we use the reverse partial order to interpret implication.
The structure $\irr\Sigma$ is always finite, a fact that is used in an essential way in our completeness proof. Below, $2^n_m$ denotes the superexponential function.

\begin{theorem}\label{thmSurjIrr}
Let $\Sigma\Subset\landi$ and let $s = \# \Sigma$. Then, $\irr\Sigma$ is a weak $\Sigma$-quasimodel and $\# I_\Sigma \leq 2^{s^2+s}_{s+1}$.
Moreover, if $\Sigma\Subset \landi$ and $\cl A$ is any deterministic weak quasimodel then ${\simu} \subseteq I_\Sigma \times |\cl A|$ is a surjective dynamic simulation.
\end{theorem}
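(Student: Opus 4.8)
The plan is to separate the statement into three parts and reuse the structure already in place. First, the superexponential bound $\# I_\Sigma \leq 2^{s^2+s}_{s+1}$ is a counting argument on the shapes of irreducible $\Sigma$-moments: an irreducible moment is a finite tree whose nodes carry $\Sigma$-types, and since irreducibility forbids the reduction map $\pi$ of Definition~\ref{secFinFrm}, no two distinct subtrees hanging off a common node can be ``collapsible'', which caps the branching degree and the depth in terms of the number $\#\type\Sigma \leq 2^s$ of available labels. One would make this precise by induction on $\prec$: the number of irreducible moments of ``height $k$'' is bounded in terms of the number of height-$(k{-}1)$ moments, and $k$ itself is bounded because a chain longer than $\#\type\Sigma$ in a tree, combined with monotonicity of $\ell$ along $\peq$, would allow a reduction. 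This yields a tower of exponentials of height roughly $s+1$; I would quote the computation rather than carry it out, since it is exactly the bookkeeping done in \cite{FernandezITLc}.

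Second, that $\irr\Sigma$ is a weak $\Sigma$-quasimodel means checking the clauses of Definition~\ref{def:quasimodel} minus seriality/$\omega$-sensibility, i.e.\ that $(|\irr{}|,\peq_{\irr{}},\ell_{\irr{}})$ is a labelled frame and that $S_{\irr{}}$ is forward-confluent and sensible. Sensibility and forward-confluence of $S_{\irr{}}$ follow directly from the way $\fw \mapsto \fv$ is defined in Definition~\ref{ts} as the existence of a sensible, forward-confluent relation between the underlying moments with the roots related; the frame conditions~\ref{cond:frame:monotony} and~\ref{cond:frame:imp} follow from the fact that each $\fw$ is itself a labelled frame (being a $\Sigma$-moment, hence a tree of types with the implication-witness property at every node) and from the definition $\ell_{\irr{}}(\fw) = \ell_\fw(r_\fw)$ together with $\fv \peq_{\irr{}} \fw \iff \fv \geq \fw$. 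Again these are routine verifications carried out in \cite{FernandezITLc}.

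Third, and this is the substantive claim, I would prove that for any deterministic weak quasimodel $\cl A$, the relation $\simu$ is a \emph{surjective dynamic simulation} from $\irr\Sigma$ to $\cl A$. \emph{Surjectivity}: given $a \in |\cl A|$, unfold the $\peq_{\cl A}$-neighbourhood of $a$ and its $S_{\cl A}$-image into a $\Sigma$-moment $\fr a$ — restrict each point's full type to $\Sigma$ — reduce $\fr a$ along $\redu$ to an irreducible moment $\fw \in I_\Sigma$, and check that the reduction map composed with the type-restriction is a simulation putting $\fw$ in relation with $a$; here one uses that $\redu$-reductions preserve labels and forward-confluence. \emph{Dynamic simulation}: given $\fw \simu a$ and $a \mathrel S_{\cl A} a'$ (a genuine function, since $\cl A$ is deterministic), one must produce $\fv$ with $\fw \mathrel S_{\irr{}} \fv$ and $\fv \simu a'$; this is exactly where the definition of $\mapsto$ as ``existence of a sensible forward-confluent relation between the underlying moments'' is tailored to work, letting us build the required moment from the $S_{\cl A}$-image of the neighbourhood of $a$ and again reduce to an irreducible one.

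The main obstacle is the surjectivity-and-dynamicity argument, specifically the need to reconcile the \emph{reduction} operation (which shrinks moments to make them irreducible) with the \emph{simulation} and \emph{dynamic-step} requirements simultaneously: one must ensure that the irreducible moment obtained by collapsing the unfolded neighbourhood of $a$ still simulates $a$ and still admits an $S_{\irr{}}$-successor simulating $S_{\cl A}(a)$, which forces a careful choice of the unfolding depth and a verification that $\redu$ commutes appropriately with the $\mapsto$ relation. All of the delicate structure theory needed for this is established in \cite{FernandezITLc}, so in this manuscript I would state Theorem~\ref{thmSurjIrr} as a black box and refer the reader there for the detailed construction.
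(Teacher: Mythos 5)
Your proposal is correct and matches the paper's treatment: the appendix explicitly omits all proofs of this material and defers entirely to \cite{FernandezITLc}, which is exactly what you do after sketching the three components. Your outline of the counting bound, the weak-quasimodel verification, and the surjectivity/dynamicity argument is a faithful reconstruction of the standard construction there, so no further comparison is needed.
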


In fact, the claim proven in Fern\'andez-Duque \cite{FernandezITLc} is more general in that $\cl A$ may belong to a wider class of topological weak qusimodels, but this special case will suffice for our purposes.

\forrefs{
\section{Status of unpublished material}\label{appReferees}

We include this short section for the benefit of the referees; it will be removed from the final version.
Our main result relies heavily on Theorem \ref{TheoITLc}, proven in Boudou et al.~\cite{BoudouLICS}, and Theorem \ref{thmSurjI}, proven in Fern\'andez-Duque \cite{FernandezITLc}. Both of these works are unpublished, but full versions are available on {\em arXiv;} precise references are provided in the bibliography.

Fern\'andez-Duque \cite{FernandezITLc} was submitted to {\em Logical Methods in Computer Science} in April 2017. After a long wait we have recently received a positive review which recommended only minor revisions. It should be noted that Theorem \ref{thmSurjIrr} is stated there for the case when $\cl A$ is a model, but the proof applies to deterministic weak quasimodels without modification and the statement will be updated in the final version.

Boudou et al.~\cite{BoudouLICS} has been submitted to {\em Logics in Computer Science} this year. It received generally positive reviews which identified only minor typos, although the final decision regarding acceptance will not be communicated until March 31st.
For neither of \cite{BoudouLICS,FernandezITLc} did any of the referees express doubts about the correctness of the results.
}

\end{document}